\newtheorem{defn}{Definition}[section]
\newtheorem{thm}{Theorem}[section]
\newtheorem{lemma}{Lemma}[section]
\newtheorem{prop}{Proposition}[section]
\newtheorem{cor}{Corollary}[section]
\title{On a  Closed Binding Curve of One-holed Torus}
\author{Weiqiang Wu\\
Department of Mathematics\\
University of Maryland, College Park\\
\texttt{waikng@math.umd.edu}}
\begin{document}
\maketitle

\begin{abstract}
    Given a closed binding curve $\gamma$ of a surface $\Sigma$, any equivalence class of marked complete hyperbolic structure can be decomposed into polygons(possibly with a puncture) with sides being hyperbolic geodesic segments. When $\Sigma$ is a one-holed torus and $\gamma = A^3 B^2$, we show that any equivalence class of marked complete hyperbolic structure gives rise to an equilateral bigon with a puncture and a hexagon with equal opposite sides. In particular, we give a new coordinates of the Fricke Space of the one-holed torus.
\end{abstract}

\newpage

\section{Introduction}
    The length function of a simple closed curve on the Teichm\"{u}ller Space $\mathcal{T}_{\Sigma}$ (Fricke Space $\mathcal{F}_{\Sigma}$) of a closed surface $\Sigma$ has been studied by S.Kerckhoff \cite{Ke80} and S.Wolpert \cite{Wo87} since 1980's, and it plays an important role in understanding the geometry of $\mathcal{T}_{\Sigma}$. One of the most important features of these length functions is that they are convex along \textit{Earthquake Paths} (or \textit{Weil-Petersson geodesics}) \cite{Ke80, Wo87}.

    \begin{defn}[Fricke Space]
        $\mathcal{F}_{\Sigma} := \left\{ (f, X) \mid \Sigma \xrightarrow{f} X \mbox{ is a diffeomorphism}\right\} / \sim$, where $X$ is a complete hyperbolic surface and $(f,X) \sim (g, Y)$ if there exists a hyperbolic isometry $i: X \rightarrow Y$ such that the following diagram commutes up to isotopy.
        $$ \begin{CD}
        \Sigma @>f>> X\\
        @| @ViVV\\
        \Sigma @>g>>Y
        \end{CD}$$
    \end{defn}

    \begin{defn}[Closed Binding Curves]
        Let $\Sigma$ be a surface (possibly with punctures), a closed curve $\gamma$ is binding if $\gamma$ intersects itself in a minimal position and $\Sigma - \gamma$ is a union of disjoint disks (possibly with a puncture).
    \end{defn}

    Let $\gamma$ be a closed binding curve of $\Sigma$, the length function $l_{\gamma}: \mathcal{F}_{\Sigma} \rightarrow \mathbb{R}$ is not only \textit{strictly convex} along \textit{Earthquake Paths} (or \textit{Weil-Petersson geodesics}) but also \textit{proper}. Consequently, it has a \textit{unique} minimum at some marked hyperbolic structure $[f_{\gamma}, X_{\gamma}] \in \mathcal{F}_{\Sigma}$.

    Also, for any $[f, X] \in \mathcal{F}_{\Sigma}$, the unique closed hyperbolic geodesic isotopic to $f(\gamma)$ will cut $X$ into polygons(possibly with a puncture) with sides being hyperbolic geodesics, and conversely given these polygons $[f, X]$ can be reconstructed. Therefore these polygons may provide invariants and combinatoric ways to study $\mathcal{F}_{\Sigma}$.

    \begin{figure}[h]
       \begin{center}
           \scalebox{.6}{\epsffile{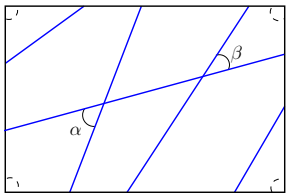}}
       \end{center}
       \caption{The closed binding curve $A^3 B^2$ in the one-holed torus.}
       \label{fig:A3B2}
    \end{figure}

    When $\Sigma_{1,1}$ is the one-holed torus and $\gamma = A^3 B^2$ as shown in Figure \ref{fig:A3B2}, any $[f,X] \in \mathcal{F}_{\Sigma_{1,1}}$ can be decomposed into
    \begin{itemize}
        \item A bigon with side lengths $a$ and $b$ and angles $\alpha$ and $\beta$, which forms a cusp region.
        \item A Hexagon with side lengths $a$, $c$, $d$, $b$, $c'$ and $d'$ and angles $\pi-\beta$, $\alpha$, $\pi-\beta$, $\pi-\alpha$, $\beta$ and $\pi - \alpha$ such that $c = c'$ and $d = d'$.
    \end{itemize}
    as in Figure \ref{fig:POLYGONS}.

    \begin{figure}[h]
       \begin{center}
           \scalebox{.6}{\epsffile{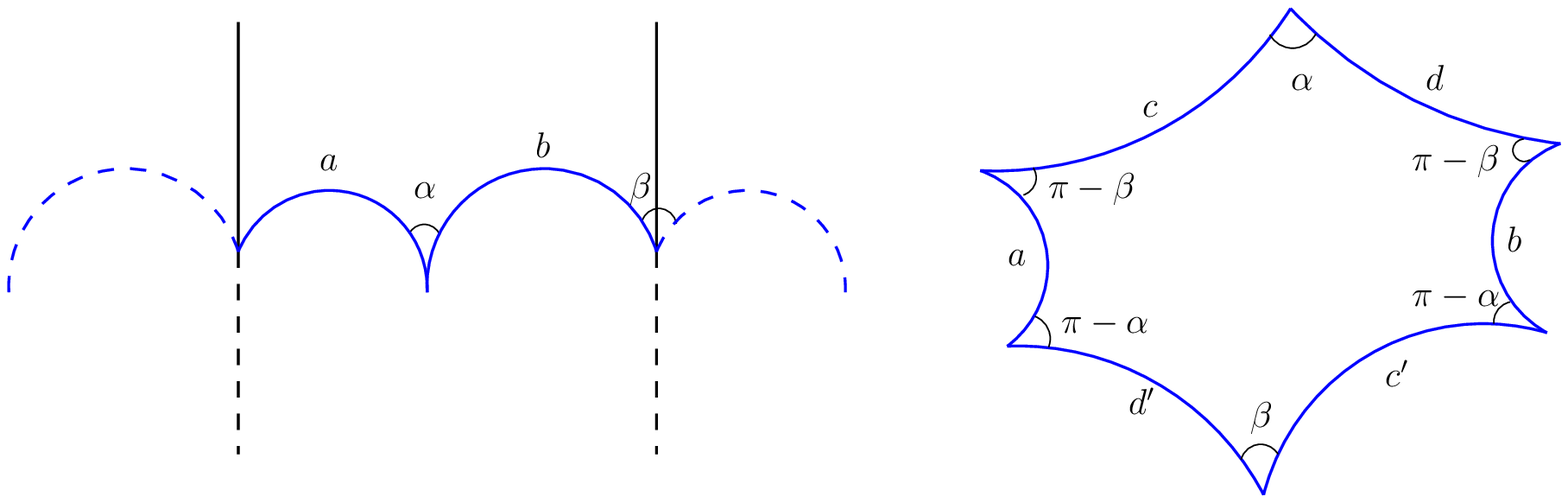}}
       \end{center}
       \caption{The punctured bigon and hexagon in a decomposition.}
       \label{fig:POLYGONS}
    \end{figure}

    Let $\mathcal{P} := \left\{ \mbox{ compatible pairs of a punctured bigon and a hexagon} \right\}$, then there is a $1$-$1$ correspondence
    $$\mathcal{P} \mathop{\rightleftharpoons}^{g}_{c} \mathcal{F}_{\Sigma_{1,1}}$$ which comes from gluing and cutting. Let $\mathcal{P}_0 \subset \mathcal{P}$ be the subset consisting of those with $\alpha = \beta$ and $a = b$.

    \begin{defn}[Length Function on $\mathcal{P}$]
        The length function $l: \mathcal{P} \rightarrow \mathbb{R}$ is given by $a+b+c+d$.
    \end{defn}

    Therefore the diagram
     $$ \begin{CD}
        \mathcal{P} @>g>> \mathcal{F}_{\Sigma_{1,1}}\\
        @VlVV @Vl_{\gamma}VV\\
        \mathbb{R} @= \mathbb{R}
     \end{CD}$$
    commutes by construction of $l$.

\section{Main Theorems}

    \begin{thm}
        $\mathcal{P}_0 = \mathcal{P}$. In particular, for any $[f,X] \in \mathcal{F}_{\Sigma_{1,1}}$, $$\alpha = \beta \in \left(0, \frac{2\pi}{3}\right),$$ and $$a = b = \log\left(\frac{1+\cos\left(\frac{\alpha}{2}\right)}{1-\cos\left(\frac{\alpha}{2}\right)}\right).$$
        \label{thm:P0}
    \end{thm}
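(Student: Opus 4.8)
The plan is to use the \emph{elliptic involution}. For every $[f,X]\in\mathcal F_{\Sigma_{1,1}}$ the surface $X$ carries a canonical orientation-preserving isometric involution $\iota$: if $\rho\colon\pi_1(\Sigma_{1,1})=\langle A,B\rangle\to\mathrm{PSL}(2,\mathbb R)$ is the holonomy of $[f,X]$, the automorphism $\phi\colon A\mapsto A^{-1},\ B\mapsto B^{-1}$ fixes the three Fricke traces $\mathrm{tr}\,\rho(A),\ \mathrm{tr}\,\rho(B),\ \mathrm{tr}\,\rho(AB)$ (using $\mathrm{tr}\,M=\mathrm{tr}\,M^{-1}$ and $\mathrm{tr}(M^{-1}N^{-1})=\mathrm{tr}(MN)$ in $\mathrm{SL}(2)$), so $\rho$ and $\rho\circ\phi$ are conjugate, as the Fricke coordinates determine an irreducible $\mathrm{SL}(2,\mathbb C)$-representation of $F_2$ up to conjugacy. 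The conjugator $g$ is unique (by irreducibility its centralizer is trivial), whence $g^2=\mathrm{id}$; and $g\neq\mathrm{id}$, for $g=\mathrm{id}$ would give $\rho(A)^2=\mathrm{id}$, impossible for a discrete faithful $\rho$. So $g$ descends to the desired involution $\iota$ of $X$. Now $\phi(\gamma)=\phi(A^3B^2)=A^{-3}B^{-2}$ is conjugate to $(A^3B^2)^{-1}$, so $\iota$ carries the closed geodesic representing $\gamma$ to itself (reversing orientation); hence $\iota$ permutes the faces of the $\gamma$-decomposition, and since the bigon has two sides and the hexagon six, $\iota$ preserves each of them.

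Next I would analyze $\iota$ on the hexagon $H$. It is an orientation-preserving isometric involution of a closed hyperbolic hexagon; it is not the identity (an isometry trivial on the interior of $H$ is trivial on $X$), so it is a half-turn about an interior point. A half-turn of a hexagon interchanges opposite sides and opposite vertices, so comparing with the cyclic data — sides $a,c,d,b,c',d'$ and angles $\pi-\beta,\alpha,\pi-\beta,\pi-\alpha,\beta,\pi-\alpha$ — we read off $a=b$ and $\pi-\beta=\pi-\alpha$; that is, $a=b$ and $\alpha=\beta$ (and we recover $c=c'$, $d=d'$). This gives $\mathcal P=\mathcal P_0$. Moreover Gauss--Bonnet applied to the punctured bigon (an annulus, so Euler characteristic $0$, with corners $\alpha,\beta$) and to $H$ yields areas $2\pi-\alpha-\beta$ and $\alpha+\beta$, forcing $\alpha=\beta\in(0,\pi)$; the sharper bound $\alpha<\tfrac{2\pi}{3}$ I expect to drop out only from the explicit compatibility of bigon and hexagon below.

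For the explicit formula, note that the bigon is now an equilateral once-punctured bigon: two geodesic sides of length $a$ joining the two corners (which they must, enclosing the puncture, since a bigon with two distinct geodesic sides cannot be simply connected), corner angles $\alpha$ at each corner, and a cusp. Being rigid for fixed $\alpha$, it carries a reflection $r$ interchanging its two sides and its two corners; $\mathrm{Fix}(r)$ is a pair of geodesic rays asymptotic to the cusp, and cutting along it splits the bigon into two congruent hyperbolic triangles, each with angles $\tfrac{\alpha}{2},\tfrac{\alpha}{2},0$ (the $0$-angle vertex at the cusp) and with the side opposite the ideal vertex equal to $a$. The standard relation for a triangle with one ideal vertex then gives $\cosh a=\dfrac{1+\cos^{2}(\alpha/2)}{\sin^{2}(\alpha/2)}$, i.e. $\tanh(a/2)=\cos(\alpha/2)$, i.e.
\[
a=b=\log\left(\frac{1+\cos(\alpha/2)}{1-\cos(\alpha/2)}\right).
\]
(The same triangle appears if one develops the bigon with the cusp at $\infty$ and core holonomy $z\mapsto z+1$, since the reflection across $\mathrm{Re}\,z=\tfrac12$ makes the developed boundary meet the verticals $\mathrm{Re}\,z=0$ and $\mathrm{Re}\,z=\tfrac12$ at angle $\alpha/2$.)

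The main obstacle, I expect, is twofold. First one must justify the model of the punctured bigon — that its completion really is a cusp, and that for fixed $\alpha$ it is unique (hence carries the reflection $r$); this is exactly where parabolicity of the cusp holonomy pins $a$ to $\alpha$. Second, and less formal, is the endpoint $\tfrac{2\pi}{3}$: the equalities $a=b$, $\alpha=\beta$ come cleanly from the half-turn symmetry of $H$, but the range of $\alpha$ has to be extracted from how $H$ is glued to the bigon along the side of length $a(\alpha)$ — that is, from the honest combinatorics and trigonometry of the geodesic $A^3B^2$ — and showing the construction degenerates precisely at $\alpha=\tfrac{2\pi}{3}$ will not follow from symmetry alone.
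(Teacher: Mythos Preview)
Your approach is correct for the core assertion $\mathcal P_0=\mathcal P$ and for the formula, and it is genuinely different from the paper's. The paper never invokes the elliptic involution; instead it builds an explicit two-parameter family $j:\mathcal V\to\mathcal P_0$ of symmetric bigon/hexagon pairs (Section~3), proves that the pulled-back length $j^*(l)$ is proper on $\mathcal V$ (Section~4), and then uses invariance of domain to conclude that $g\circ j:\mathcal V\to\mathcal F_{\Sigma_{1,1}}$ is a diffeomorphism. Surjectivity of $g\circ j$ then forces $\mathcal P_0=\mathcal P$, and the bound $\alpha<\tfrac{2\pi}{3}$ drops out because $\mathcal V$ is defined with $t=\cos(\alpha/2)\in(\tfrac12,1)$. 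Your symmetry argument is more conceptual and gets $a=b$, $\alpha=\beta$ in one stroke, but, as you yourself flag, it does not by itself yield the endpoint $\tfrac{2\pi}{3}$; in the paper that bound is a by-product of the explicit hexagon construction rather than a separate lemma, so your instinct about where the remaining work lies is exactly right.

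Two small points. First, a slip in your description of the reflection: the $r$ that produces the $(\tfrac{\alpha}{2},\tfrac{\alpha}{2},0)$ triangles is the one that \emph{fixes} the two corners and swaps the two sides (its fixed locus bisects the corner angles); the reflection that swaps the corners yields instead two quadrilaterals with angles $\alpha,\tfrac{\pi}{2},0,\tfrac{\pi}{2}$. Your subsequent computation is the correct one for the corner-fixing reflection, so only the words need adjusting. Second, the paper obtains the same formula for $a$ by a direct upper-half-plane integral $a=\int_{\alpha/2}^{\pi-\alpha/2}\frac{d\theta}{\sin\theta}$ (Lemma~2.3), which is equivalent to your ideal-triangle computation; either route is fine.
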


    \begin{figure}[h]
       \begin{center}
           \scalebox{.5}{\epsffile{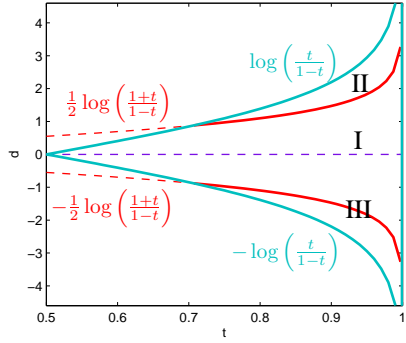}}
       \end{center}
       \caption{The region of $\mathcal{V}$.}
       \label{fig:REGION}
    \end{figure}

    \newpage
    First, we need the following lemmas.
    \begin{lemma}[Existence]
        There exists an injective map $j: \mathcal{V} \rightarrow \mathcal{P}_0$, where
        $$ \mathcal{V} := \left\{ (t, s)\in \mathbb{R}^2 \mid  -\log\left(\frac{t}{1-t}\right)< s < \log\left(\frac{t}{1-t}\right), \frac{1}{2}< t < 1\right\}.$$
        \label{thm:EXISTENCE}
    \end{lemma}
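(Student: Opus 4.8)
The plan is to construct $j$ by hand: to each $(t,s)\in\mathcal V$ I attach an equilateral punctured bigon $B(t)$ and a hexagon $H(t,s)$ with equal opposite sides, show that the pair $(B(t),H(t,s))$ is compatible (so lies in $\mathcal P_0$), and then recover $(t,s)$ from the pair to get injectivity.

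For the bigon I work in the upper half plane, placing the puncture at $\infty$ and normalizing the peripheral parabolic to $z\mapsto z+1$. An equilateral cusped bigon then lifts to a reflection-symmetric zigzag of geodesic segments, invariant under this parabolic, whose vertices lie on a common horocycle and whose two segments over one period are arcs of circles centred at $0$ and $\tfrac12$. A direct computation with this configuration produces, for each admissible value of the common vertex angle $\alpha$, such a bigon, and shows that its side length $a$ and its angle are forced to obey $a=\log\!\left(\tfrac{1+\cos(\alpha/2)}{1-\cos(\alpha/2)}\right)$. I then declare $a=\log\!\big(\tfrac{t}{1-t}\big)$, which is positive because $t>\tfrac12$; matching this with the identity determines $\alpha=\alpha(t)$, and one checks that $\alpha(t)$ stays in the admissible range as $t$ runs over $\left(\tfrac12,1\right)$. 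Establishing this closed form for the bigon is the first main computation.

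The hexagon $H(t,s)$ has $a=b$ and $\alpha=\beta$ fixed already, and its angle list together with $c=c'$, $d=d'$ makes the side-and-angle data invariant under the half-turn that cyclically shifts the two triples of sides; hence any hexagon realizing it is centrally symmetric and is obtained by half-turning a three-edge path $w_1\xrightarrow{a}w_2\xrightarrow{c}w_3\xrightarrow{d}w_4$ about the midpoint of $w_1w_4$. Then the identities $b=a$, $c'=c$, $d'=d$ and three of the six angle conditions hold automatically, and only a single angle condition (at $w_4$) survives; with $a$ and $\alpha$ fixed it cuts $(c,d)$ down to a one-parameter family, which I parametrize by a signed length $s$ read off from the path. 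A monotonicity-and-degeneration argument then shows that an embedded convex hexagon of this type exists precisely when $-\log\!\big(\tfrac{t}{1-t}\big)<s<\log\!\big(\tfrac{t}{1-t}\big)$, i.e.\ exactly on the interval appearing in the definition of $\mathcal V$, the endpoints corresponding to degenerations of the hexagon. I expect pinning this interval down to be the hardest step.

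Finally, compatibility: $B(t)$ and $H(t,s)$ share $a=b$ and $\alpha=\beta$ by construction, and at each of the two self-intersection points produced by the gluing the four incident corners sum to $2\pi$ --- namely $\alpha+(\pi-\alpha)+\alpha+(\pi-\alpha)$ at one and $\beta+(\pi-\beta)+(\pi-\beta)+\beta$ at the other. Hence $(B(t),H(t,s))\in\mathcal P_0$, and we set $j(t,s):=(B(t),H(t,s))$. Injectivity is then immediate: from $j(t,s)$ one reads off the bigon side length, hence $t$, and then the hexagon parameter, hence $s$.
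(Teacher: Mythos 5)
Your outline is in fact the same as the paper's: produce the equilateral cusped bigon from the relation $a=\log\left(\frac{1+\cos(\alpha/2)}{1-\cos(\alpha/2)}\right)$ (the paper's Lemma \ref{thm:CUSP}), build the hexagon as a centrally symmetric double of a three-edge path carrying $a$, $c$, $d$, check compatibility via the angle sums at the two crossings, and recover $(t,s)$ from the pair to get injectivity. The bigon computation and the compatibility check are fine (though note that invariance of the side-and-angle data under the cyclic shift does not by itself force every realizing hexagon to be centrally symmetric; constructing the symmetric ones is of course enough for existence).

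The gap is in the hexagon step, which you yourself flag as the hardest and which is the entire content of the lemma: it is only asserted (``a monotonicity-and-degeneration argument then shows\ldots''), and moreover your normalization makes the asserted conclusion false. You declare $a=\log\left(\frac{t}{1-t}\right)$, so $\cos(\alpha/2)=2t-1$ and your claimed existence range $|s|<\log\left(\frac{t}{1-t}\right)$ reads $|s|<a$. But the quantity $\log\left(\frac{t}{1-t}\right)$ in the definition of $\mathcal V$ is not the bigon side length: the intended identification (see the remark after the lemma) is $t=\cos(\alpha/2)$, hence $a=\log\left(\frac{1+t}{1-t}\right)$, and then $\log\left(\frac{t}{1-t}\right)=\log\left(\frac{\cos(\alpha/2)}{1-\cos(\alpha/2)}\right)$ is the degeneration threshold for the displacement $s$ of the midpoint of $a$ from the common perpendicular; it is strictly smaller than $a$ and becomes non-positive as soon as $\alpha\ge\frac{2\pi}{3}$. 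Under your identification, the bigons with $t\in\left(\frac12,\frac34\right]$ have $\alpha\ge\frac{2\pi}{3}$ and admit no compatible hexagon at all (this is exactly what the threshold expresses, and it is confirmed by Theorem \ref{thm:P0}), so $H(t,s)$ cannot be constructed on a nonempty open subset of $\mathcal V$ and the map $j$ you describe is not defined there. To repair this you must tie $t$ to the bigon by $t=\cos(\alpha/2)$ and actually carry out the existence proof on the stated interval, which is what the paper does: it splits $\mathcal V$ into the regions $|s|<\frac a2$, $|s|>\frac a2$, $|s|=\frac a2$, proves explicit non-intersection estimates comparing $\frac a2\pm s$ (resp.\ $-s-\frac a2$) with the distances $r$ and $r'$ of (\ref{eqn:r}) and (\ref{eqn:r'}), and then runs an intermediate-value argument on the area cut off by a translating perpendicular geodesic to locate the side meeting at angle exactly $\alpha$, before doubling. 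That computation, with the correct meaning of $t$ and a precise definition of $s$, is the content your sketch still owes.
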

    \vspace{-0.25 in}
    This lemma will be proved in Section \ref{sec:EXISTENCE}.

    \noindent \textbf{Remark}
    \begin{itemize}
        \item $s$ will be the displacement of the mid point of $a$ from the common perpendicular of $a$ and $b$.
        \item $t$ will be $\cos\left(\frac{\alpha}{2}\right)$.
    \end{itemize}

    \begin{lemma}[Properness]
        The pullback of the length function $j^*(l): \mathcal{V} \rightarrow \mathbb{R}$ is proper.
        \label{thm:PROPERNESS}
    \end{lemma}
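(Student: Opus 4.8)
The plan rests on the commuting square together with the fact, noted in the Introduction, that $l_{\gamma}$ is proper on $\mathcal{F}_{\Sigma_{1,1}}$. Since $g:\mathcal{P}\to\mathcal{F}_{\Sigma_{1,1}}$ is a homeomorphism and $l=l_{\gamma}\circ g$, the function $l:\mathcal{P}\to\mathbb{R}$ is itself proper, so $j^{*}(l)=l\circ j$ will be proper as soon as $j:\mathcal{V}\to\mathcal{P}$ is proper, i.e.\ as soon as $j(t_{n},s_{n})$ leaves every compact subset of $\mathcal{P}$ whenever $(t_{n},s_{n})$ leaves every compact subset of $\mathcal{V}$; here a subfamily of $\mathcal{P}$ is precompact precisely when the edge lengths stay in a fixed compact range and no angle degenerates. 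Now $\mathcal{V}\subset(\tfrac12,1)\times\mathbb{R}$ has bounded intersection with each half-plane $\{t\le 1-\varepsilon\}$, so a sequence $(t_{n},s_{n})\in\mathcal{V}$ leaves every compact subset of $\mathcal{V}$ iff, after passing to a subsequence, either (i) $t_{n}\to 1$, or (ii) $t_{n}\to t_{\infty}\in[\tfrac12,1)$ and $|s_{n}|-\log\!\left(\frac{t_{n}}{1-t_{n}}\right)\to 0$; case (ii) with $t_{\infty}=\tfrac12$ forces $s_{n}\to 0$ and so already contains the corner $(\tfrac12,0)=\lim_{n}\big(\tfrac12+\tfrac1n,0\big)$. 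It therefore suffices to exhibit a degeneration of the punctured bigon–hexagon pair in each of the two regimes.

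Regime (i) is handled by the bigon: the construction of $j$ in Section~\ref{sec:EXISTENCE} records that for $j(t,s)\in\mathcal{P}_{0}$ one has $a=b=\log\!\left(\frac{1+t}{1-t}\right)$ (this is the parabolicity condition for the bigon loop), so $a=b\to+\infty$ as $t\to 1$, the bigon edge lengths are unbounded, and in particular $j^{*}(l)(t,s)\ge 2\log\!\left(\frac{1+t}{1-t}\right)\to+\infty$.

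Regime (ii) is the crux. Here $a=b\to\log\!\left(\frac{1+t_{\infty}}{1-t_{\infty}}\right)$ and $\alpha=2\arccos t$ stay in a compact range, so the degeneration must occur among the hexagon edges $c$ and $d$: geometrically, the slanted walls $s=\pm\log\!\left(\frac{t}{1-t}\right)$ of $\mathcal{V}$ should be exactly the locus on which the hexagon compatible with the cusped bigon acquires an ideal vertex, one of the two edges adjacent to the edge $a$ running off to infinity. To make this precise I would use the hyperbolic-trigonometric expressions for $c=c(t,s)$ and $d=d(t,s)$ derived in Section~\ref{sec:EXISTENCE} by dropping the common perpendicular of the edges $a$ and $b$ and then the perpendicular from the $\alpha$-vertex — which cuts the hexagon into right-angled, Lambert-type quadrilaterals whose sides are expressed through $a$, $\alpha$ and the displacement $s$ — and check that $c$ or $d$ fails to remain in any interval $[\delta,\delta^{-1}]$ as $s\to\pm\log\!\left(\frac{t}{1-t}\right)$ with $t$ bounded away from $1$ (one expects $\cosh c$, resp.\ $\cosh d$, to be comparable to a constant times $\big(\log(\tfrac{t}{1-t})-s\big)^{-1}$, resp.\ $\big(\log(\tfrac{t}{1-t})+s\big)^{-1}$, with constants locally uniform in $t$, and a parallel estimate as $t\to\tfrac12$ covers the corner). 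If the offending edge tends to $0$, then $j(t_{n},s_{n})$ leaves every compact of $\mathcal{P}$ and $l\big(j(t_{n},s_{n})\big)\to+\infty$ by properness of $l$ on $\mathcal{P}$; if it tends to $\infty$, then $l\ge c\to\infty$ or $l\ge d\to\infty$ directly. This last estimate — extracting from the formulas of Section~\ref{sec:EXISTENCE} that one of $c,d$ escapes every compact subinterval of $(0,\infty)$ along the slanted walls, rather than both stabilising — is the only real obstacle; granting it, $j^{*}(l)(t_{n},s_{n})\to+\infty$ whenever $(t_{n},s_{n})$ leaves every compact subset of $\mathcal{V}$, which is the properness of $j^{*}(l)$.
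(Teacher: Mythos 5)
There is a genuine gap, and it sits exactly at the crux of the lemma. Your reduction to two regimes is fine, and regime (i) ($t_n \to 1$, so $a = \log\bigl(\tfrac{1+t}{1-t}\bigr) \to \infty$) matches the paper's first case. But in regime (ii) — $(t_n,s_n)$ approaching a slanted wall $s = \pm\log\bigl(\tfrac{t}{1-t}\bigr)$ with $t$ bounded away from $1$ — your argument is explicitly conditional: you posit that $\cosh c$ or $\cosh d$ blows up (or collapses) at a rate comparable to $\bigl(\log(\tfrac{t}{1-t}) \mp s\bigr)^{-1}$ and say ``granting it.'' That estimate is not granted anywhere: Section \ref{sec:EXISTENCE} does \emph{not} derive closed-form expressions $c(t,s)$, $d(t,s)$ (the existence proofs there are continuity/area arguments via intermediate positions of a translated geodesic, not trigonometric formulas; an explicit formula for $c$ appears only on the axis $s=0$ in Theorem \ref{thm:LENGTH}). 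Establishing the degeneration at the walls is precisely the content of the paper's proof: using the quantities $r$ and $r'$ from (\ref{eqn:r}) and (\ref{eqn:r'}), it shows that as $s_n \to -\log\bigl(\tfrac{t_n}{1-t_n}\bigr)$ the relevant geodesic becomes asymptotic to the perpendicular foot (meets it at infinity), so $c_{2,n}\to\infty$ when $\hat t \in (\tfrac{\sqrt2}{2},1)$, while for $\hat t \in [\tfrac12,\tfrac{\sqrt2}{2})$ one gets $a_{1,n}+c_n+d_n+a_{2,n}\to\infty$ by forcing $c_n$ to cross a far-away perpendicular, with a separate three-subcase analysis at $\hat t = \tfrac{\sqrt2}{2}$. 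Without some version of this geometric (or trigonometric) input, your proof does not rule out the possibility that all six sides and all angles of the hexagon stay in a compact range as the wall is approached, and the lemma is not proved.

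A secondary weakness: your detour through properness of $l$ on $\mathcal{P}$ needs $g:\mathcal{P}\to\mathcal{F}_{\Sigma_{1,1}}$ to be a homeomorphism for a topology on $\mathcal{P}$ in which precompactness is ``lengths bounded above and below, angles nondegenerate.'' The paper only asserts a set-theoretic bijection from cutting and gluing; neither the topology on $\mathcal{P}$, nor continuity of $g$ and $g^{-1}$, nor that characterization of compact sets is established, so this step imports unproved structure. It is also unnecessary: in the wall regime the expected behavior is that an edge length itself tends to infinity (as the paper shows), in which case $j^*(l) \ge c \to \infty$ directly and no properness of $l$ on $\mathcal{P}$ is needed. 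So the fix is not more abstract machinery but the concrete boundary estimate — either reproduce the paper's asymptotic-geodesic argument or actually derive the Lambert-quadrilateral formulas for $c(t,s)$, $d(t,s)$ that you allude to and extract the blow-up from them.
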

        This lemma will be proved in Section \ref{sec:PROPERNESS}.

    \begin{thm}
        $g\circ j: \mathcal{V} \rightarrow \mathcal{F}_{\Sigma_{1,1}}$ is a diffeomorphism.
        \label{thm:MAIN}
    \end{thm}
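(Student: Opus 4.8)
The plan is to write $h := g\circ j : \mathcal{V}\to\mathcal{F}_{\Sigma_{1,1}}$ and show that $h$ is a real-analytic local diffeomorphism which is both injective and proper; surjectivity then comes for free from a point-set argument, and a bijective local diffeomorphism is a diffeomorphism. The three ingredients I will feed in are: $g$ is a real-analytic diffeomorphism (the gluing/cutting correspondence), $j$ is injective (Lemma \ref{thm:EXISTENCE}), and $j^*(l)$ is proper (Lemma \ref{thm:PROPERNESS}).

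First I would check that $h$ is a local diffeomorphism. Both $\mathcal{V}$ and $\mathcal{F}_{\Sigma_{1,1}}$ are real-analytic $2$-manifolds (recall $\dim\mathcal{F}_{\Sigma_{1,1}}=6g-6+2n=2$ for $g=n=1$). Cutting along the $\gamma$-geodesic records the side lengths and angles of the punctured bigon and the hexagon, and these are real-analytic functions of Fricke coordinates via hyperbolic trigonometry, with real-analytic inverse given by gluing; so $g$ is a real-analytic diffeomorphism of $2$-manifolds. The map $j$ constructed in Section \ref{sec:EXISTENCE} is given there by explicit real-analytic formulas in $(t,s)$, and I would verify in that section that its Jacobian is nonvanishing on $\mathcal{V}$, so that $j$ is a real-analytic immersion (indeed an embedding onto $j(\mathcal{V})\subset\mathcal{P}_0$). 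Then $h$ is an injective immersion between equidimensional manifolds, hence a local diffeomorphism, and in particular an open map.

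Next, injectivity of $h$ is immediate from Lemma \ref{thm:EXISTENCE} together with the bijectivity of $g$. For properness, the commuting square defining $l$ gives $l_\gamma\circ h = l_\gamma\circ g\circ j = l\circ j = j^*(l)$, which is proper by Lemma \ref{thm:PROPERNESS}; since $l_\gamma$ is continuous, any compact $K\subset\mathcal{F}_{\Sigma_{1,1}}$ has $l_\gamma(K)$ bounded, so $h^{-1}(K)$ is a closed subset of the compact set $(l_\gamma\circ h)^{-1}(l_\gamma(K))$ and hence compact. Thus $h$ is proper. Finally, a proper continuous map into a locally compact Hausdorff space is closed, so $h(\mathcal{V})$ is both open and closed in the connected manifold $\mathcal{F}_{\Sigma_{1,1}}$ and nonempty, hence equals all of it; $h$ is therefore a bijective local diffeomorphism, i.e. a diffeomorphism.

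I expect the real work to lie in the two \emph{soft} inputs rather than in the final topological assembly: (i) making precise, with the correct regularity, that the gluing/cutting correspondence $g$ is a real-analytic diffeomorphism onto $\mathcal{F}_{\Sigma_{1,1}}$ --- that a compatible pair in $\mathcal{P}$ glues to a genuine complete hyperbolic structure on $\Sigma_{1,1}$ whose $\gamma$-geodesic recovers the cut, and that this is a two-sided inverse to cutting; and (ii) confirming that $j$ has everywhere nondegenerate differential, which is where the explicit parametrization of Section \ref{sec:EXISTENCE} does its job. Once those are in place, Lemmas \ref{thm:EXISTENCE} and \ref{thm:PROPERNESS} slot directly into the template ``a proper local diffeomorphism onto a connected manifold is a covering, hence, being injective, a diffeomorphism.'' As a remark, the strict convexity and properness of $l_\gamma$ along earthquake paths recalled in the introduction give an alternative, Morse-theoretic route to surjectivity, but the properness of $j^*(l)$ already makes that detour unnecessary.
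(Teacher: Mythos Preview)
Your proof is correct and follows essentially the same route as the paper: injectivity from Lemma~\ref{thm:EXISTENCE}, properness of $h$ from Lemma~\ref{thm:PROPERNESS} via the commuting square $l_\gamma\circ h=j^*(l)$, and then surjectivity from the open-and-closed argument on the connected $2$-manifold $\mathcal{F}_{\Sigma_{1,1}}$ (the paper phrases this last step as Invariance of Domain). You are somewhat more careful than the paper about the regularity step---explicitly flagging that one must check $j$ is an immersion to upgrade ``homeomorphism'' to ``diffeomorphism''---whereas the paper simply asserts that $g\circ j$ is an injective local diffeomorphism and moves on.
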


    \begin{proof}[Proof of Theorem \ref{thm:MAIN}]
        Note that $\mathcal{F}_{\Sigma_{1,1}}$ is diffeomorphic to $\mathbb{R}^2$, hence $g \circ j$ is an injective local diffeomorphism.

        In addition $g \circ j$ is proper, since $j^*(l)$ and $l_{\gamma}$ are proper by Lemma \ref{thm:PROPERNESS} and the following diagram commutes.
        $$ \begin{CD}
        \mathcal{V} @>g \circ j>> \mathcal{F}_{\Sigma_{1,1}}\\
        @Vj^*(l)VV @Vl_{\gamma}VV\\
        \mathbb{R} @= \mathbb{R}
        \end{CD}$$

        Therefore $g \circ j$ is a diffeomorphism by \textit{Invariance of Domain} (See \cite{Ha}).
    \end{proof}

    \newpage

    \begin{proof}[Proof of Theorem \ref{thm:P0}]
        $g \circ j$ is onto by Theorem \ref{thm:MAIN}, hence $P_0 = P$. Since $\cos\left(\frac{\alpha}{2}\right) = t \in \left(\frac{1}{2}, 1\right)$ (see remark following lemma \ref{thm:EXISTENCE}),
        $$ \alpha = \beta \in \left(0, \frac{2\pi}{3}\right)$$
        and
        $$ a = b = \log\left(\frac{1+\cos\left(\frac{\alpha}{2}\right)}{1-\cos\left(\frac{\alpha}{2}\right)}\right)$$
        follows from the following lemma.
    \end{proof}

    \begin{lemma}
        If $a=b$ and $\alpha = \beta \in (0, \pi)$, then
        $$ a = \log\left(\frac{1+\cos\left(\frac{\alpha}{2}\right)}{1-\cos\left(\frac{\alpha}{2}\right)}\right).$$
        \label{thm:CUSP}
    \end{lemma}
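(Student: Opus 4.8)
The plan is to cut the punctured bigon along the two geodesic arcs from the cusp to the corners $P$ and $Q$, obtaining two hyperbolic triangles each having an ideal vertex at the cusp, to express the length of the finite side of such a triangle in terms of its two finite angles, and then to use the gluing data together with $a=b$ and $\alpha=\beta$ to force every one of those finite angles to equal $\tfrac{\alpha}{2}$; the length formula then yields the asserted value of $a$.

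The two facts I need about a hyperbolic triangle with one ideal vertex and finite angles $\mu,\nu$ both come from one picture in the upper half-plane with the ideal vertex at $\infty$: the two sides through $\infty$ become vertical half-lines, the third side is an arc of a Euclidean semicircle, and integrating the length element $ds=\frac{d\theta}{\sin\theta}$ along that arc shows (i) the third side has length $\log\!\left(\cot\tfrac{\mu}{2}\cot\tfrac{\nu}{2}\right)$, and (ii) if the $\mu$-vertex is placed at height $1$ then the $\nu$-vertex sits at height $\tfrac{\sin\nu}{\sin\mu}$.

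Writing $\alpha_{1},\beta_{1}$ and $\alpha_{2},\beta_{2}$ for the angles at $P,Q$ in the triangle carrying the $a$-edge and the $b$-edge respectively, the cutting gives $\alpha_{1}+\alpha_{2}=\alpha$ and $\beta_{1}+\beta_{2}=\beta$; fact (i) together with $a=b$ gives $\cot\tfrac{\alpha_{1}}{2}\cot\tfrac{\beta_{1}}{2}=\cot\tfrac{\alpha_{2}}{2}\cot\tfrac{\beta_{2}}{2}$; and in order that regluing along the remaining pair of arcs produce a surface rather than a cone point at $Q$, fact (ii) forces $\tfrac{\sin\beta_{1}}{\sin\alpha_{1}}=\tfrac{\sin\beta_{2}}{\sin\alpha_{2}}$. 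Now using $\alpha=\beta$: the length identity together with the strict monotonicity of $x\mapsto\cot\tfrac{x}{2}\big/\cot\tfrac{\alpha-x}{2}$ on $(0,\alpha)$ forces $\beta_{1}=\alpha_{2}$ and $\beta_{2}=\alpha_{1}$; feeding this into the height identity gives $\sin^{2}\alpha_{1}=\sin^{2}\alpha_{2}$, and since $\alpha<\pi$ excludes $\alpha_{2}=\pi-\alpha_{1}$ we obtain $\alpha_{1}=\alpha_{2}$, hence $\alpha_{1}=\alpha_{2}=\beta_{1}=\beta_{2}=\tfrac{\alpha}{2}$.

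Finally, fact (i) with $\mu=\nu=\tfrac{\alpha}{2}$ gives $a=\log\cot^{2}\tfrac{\alpha}{4}$, and the identity $\cot^{2}x=\frac{1+\cos 2x}{1-\cos 2x}$ at $x=\tfrac{\alpha}{4}$ rewrites this as $a=\log\!\left(\frac{1+\cos\tfrac{\alpha}{2}}{1-\cos\tfrac{\alpha}{2}}\right)$; note that $\alpha\in(0,\pi)$ is precisely the range making the right side positive and finite. I expect the main obstacle to be the third step --- pinning down exactly which relations the cut-and-paste imposes, especially the cone-point/height condition (without which $a$ would not be determined by $\alpha$ at all), and checking that these relations together force every angle to $\tfrac{\alpha}{2}$; everything else is a short half-plane computation. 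An alternative packaging of that step is to show first that $a=b$ and $\alpha=\beta$ force a reflective isometry of the punctured bigon interchanging its two edges, whose fixed geodesics then bisect the corner angles directly.
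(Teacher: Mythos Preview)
Your argument is correct and reaches the same formula, but it is considerably more elaborate than the paper's. The paper simply places the symmetric punctured bigon in the upper half-plane with the cusp at $\infty$ and observes (from the figure, using the evident reflective symmetry coming from $a=b$, $\alpha=\beta$) that the side $a$ is the arc of a semicircle running between parameters $\theta=\alpha/2$ and $\theta=\pi-\alpha/2$; the integral $\int_{\alpha/2}^{\pi-\alpha/2}\frac{d\theta}{\sin\theta}$ then gives the result in one line. You instead cut to both corners, obtain two ideal triangles, and carefully derive from the gluing constraints that every half-angle must equal $\alpha/2$ --- this is exactly the work that the paper's symmetry observation bypasses, and indeed you note this symmetry shortcut yourself in your final sentence. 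What your route buys is an explicit justification of why the corner angles are bisected, which the paper leaves to the picture. One small imprecision worth flagging: your height condition $\sin\beta_1/\sin\alpha_1=\sin\beta_2/\sin\alpha_2$ is not really about avoiding a cone point at $Q$ (the total angle there is $\beta_1+\beta_2=\beta$ automatically); it is the condition that the holonomy around the puncture be \emph{parabolic} rather than loxodromic, i.e., that the puncture is a genuine cusp. With that reading your derivation is sound.
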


    \begin{proof}
        Use the upper half plane model with the hyperbolic metric $$ ds^2 = \frac{dx^2+dy^2}{y^2}.$$
        The length of the geodesic segment $a$ is given by
        $$ a = \int_{\frac{\alpha}{2}}^{\pi-\frac{\alpha}{2}} \frac{d \theta}{ \sin(\theta) }= \log\left(\frac{1+\cos\left(\frac{\alpha}{2}\right)}{1-\cos\left(\frac{\alpha}{2}\right)}\right).$$
    \end{proof}

    \begin{figure}[h]
       \begin{center}
           \scalebox{.6}{\epsffile{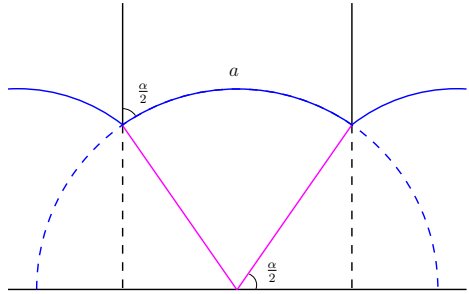}}
       \end{center}
       \caption{Relation between $a$ and $\alpha$ in the cusp.}
       \label{fig:CUSP}
    \end{figure}

\section{Existence of Compatible Pairs from $\mathcal{V}$}
\label{sec:EXISTENCE}
    In this section, we will prove \textbf{Lemma \ref{thm:EXISTENCE}}, i.e.

    \bigskip
    \textit{There exists an injective map $j: \mathcal{V} \rightarrow \mathcal{P}_0$, where}
    \begin{equation} \mathcal{V} := \left\{ (t, s)\in \mathbb{R}^2 \mid  -\log\left(\frac{t}{1-t}\right)< s < \log\left(\frac{t}{1-t}\right), \frac{1}{2}< t < 1\right\}. \label{eqn:V}\end{equation}
    \smallskip

    Given $t \in \left(\frac{1}{2}, 1\right)$, Lemma \ref{thm:CUSP} also guarantees the existence of the punctured bigon with
    \begin{eqnarray}
    && \alpha = \beta = 2 \cos^{-1}(t) \notag\\
    && a =  b = \log\left(\frac{1+t}{1-t}\right). \label{eqn:a}
    \end{eqnarray}

    We divide $\mathcal{V}$ into three parts as also shown in Figure \ref{fig:REGION}.
    \begin{itemize}
        \item $I = \left\{ (t,s) \in \mathcal{V} \mid -\frac{1}{2}\log\left(\frac{1+t}{1-t}\right) < s< \frac{1}{2}\log\left(\frac{1+t}{1-t}\right)\right\}$
        \item $II = \left\{ (t,s) \in \mathcal{V} \mid  s < -\frac{1}{2}\log\left(\frac{1+t}{1-t}\right) \mbox{ or } s > \frac{1}{2}\log\left(\frac{1+t}{1-t}\right)\right\}$
        \item $III = \left\{ (t,s) \in \mathcal{V} \mid s = \pm \frac{1}{2}\log\left(\frac{1+t}{1-t}\right)\right\}$
    \end{itemize}

    Injectivity of $j$ will follow from our construction.

\subsection{Existence of Type I Hexagons}
    For any $(t,s)$ in
    \begin{equation} I = \left\{ (t,s) \in \mathcal{V} \mid -\frac{1}{2}\log\left(\frac{1+t}{1-t}\right) < s < \frac{1}{2}\log\left(\frac{1+t}{1-t}\right)\right\} \label{eqn:TYPE1}\end{equation}
    we are going to construct the hexagon as shown in Figure \ref{fig:TYPE1}.
    \begin{figure}[h]
       \begin{center}
           \scalebox{.6}{\epsffile{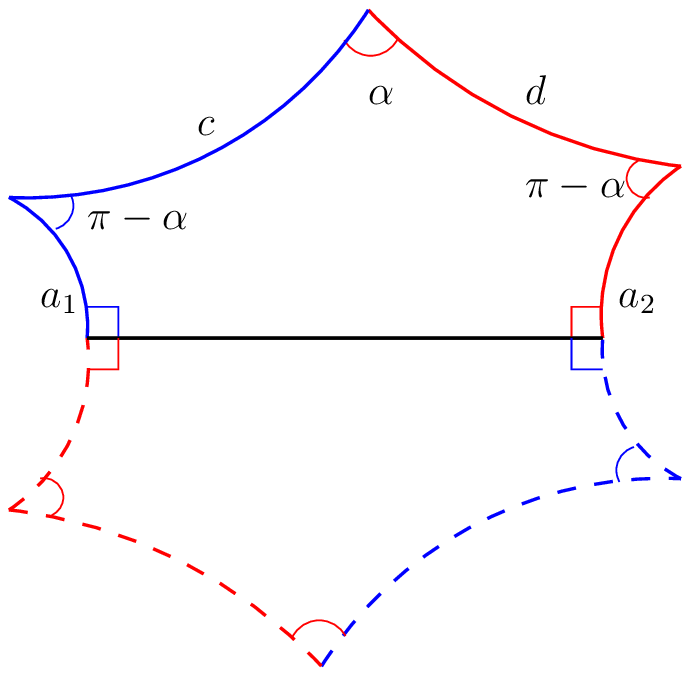}}
       \end{center}
       \caption{Shape of a Type I Hexagon.}
       \label{fig:TYPE1}
    \end{figure}

    \begin{itemize}
    \item
    First of all, $\alpha$ and $a$ is determined by (\ref{eqn:a}) and let
    \begin{eqnarray}
        && a_1 := \frac{a}{2} + s = \frac{1}{2} \log\left(\frac{1+t}{1-t}\right) + s \notag\\
        && a_2 := \frac{a}{2} - s = \frac{1}{2} \log\left(\frac{1+t}{1-t}\right) - s.
    \end{eqnarray}

    Therefore by (\ref{eqn:V}) and (\ref{eqn:TYPE1}),
    \begin{equation} \label{eqn:TYPE1a1a2}
        a_1, a_2 \in \left\{
        \begin{array}{lcl}
                \left(\frac{1}{2}\log\left(\frac{1+t}{1-t}\right) - \log\left(\frac{t}{1-t}\right), \frac{1}{2}\log\left(\frac{1+t}{1-t}\right) + \log\left(\frac{t}{1-t}\right)\right) & , & t \in \left(\frac{1}{2}, \frac{\sqrt{2}}{2}\right)\\ \\
                \left(0, \log\left(\frac{1+t}{1-t}\right)\right) & , & t \in \left[\frac{1}{2}, 1\right)
            \end{array}\right.
    \end{equation}

    \item Second, we need to show
    \begin{prop}\label{thm:TYPE1C}
        The geodesic ray along $c$ does not intersect the common perpendicular on the right; and the geodesic ray along $d$ does not intersect the common perpendicular on the left either. (see Figure \ref{fig:TYPE1})
    \end{prop}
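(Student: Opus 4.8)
The plan is to translate the statement into the classical formula for the angle of parallelism and then to reduce it to the inequality defining $\mathcal V$; in fact I would prove the slightly stronger claim that the ray along $c$ does not meet the full geodesic line carrying the common perpendicular at all. First I would fix coordinates: use the upper half-plane model and place the geodesic line $\ell$ carrying the common perpendicular of $a$ and $b$ along the positive imaginary axis, with feet $F_a$ on $a$ and $F_b$ on $b$; one may take $F_a=i$ and $a$ the unit semicircle, so that $a\perp\ell$ at $F_a$. Let $W=a\cap c$. Since $a\perp\ell$ at $F_a$, the sub-segment of $a$ from $W$ to $F_a$ is itself perpendicular to $\ell$, so $F_a$ is the foot of the perpendicular dropped from $W$ to $\ell$ and $d(W,\ell)$ is the length of that sub-segment — one of the two pieces into which $F_a$ splits $a$, call it $a_\ast\in\{a_1,a_2\}$. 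By (\ref{eqn:TYPE1a1a2}) one has $a_\ast>\tfrac a2-\log\left(\tfrac t{1-t}\right)$. Because the interior angle of the hexagon at $W$ is $\pi-\beta$ (see Figure~\ref{fig:TYPE1}), the geodesic ray along $c$ issued from $W$ makes angle exactly $\pi-\beta$ with the perpendicular $W\to F_a$.

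I would then observe that $c$ enters the interior of the hexagon, which lies on the $F_b$–side of the geodesic through $a$; hence the ray along $c$ stays on that side and can only cross $\ell$ on the $F_b$–side of $F_a$, i.e. ``on the right'' in Figure~\ref{fig:TYPE1}. Now invoke the angle of parallelism: a geodesic ray from a point $P$ with $d(P,m)=\rho$ making angle $\phi$ with the perpendicular from $P$ to $m$ meets $m$ if and only if $\phi<\Pi(\rho)$, where $\cos\Pi(\rho)=\tanh\rho$. Applying this with $P=W$, $m=\ell$, $\rho=a_\ast$, $\phi=\pi-\beta$, the ray along $c$ meets $\ell$ iff $\pi-\beta<\Pi(a_\ast)$, i.e. iff $\tanh(a_\ast)<\cos(\pi-\beta)=-\cos\beta$, i.e. iff $a_\ast<\operatorname{arctanh}(-\cos\beta)$. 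So it suffices to show $a_\ast\ge\operatorname{arctanh}(-\cos\beta)$.

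For that I would use (\ref{eqn:a}): $a=\log\left(\tfrac{1+t}{1-t}\right)$ and $t=\cos\left(\tfrac\beta2\right)$, whence a one-line computation gives $\tfrac a2-\log\left(\tfrac t{1-t}\right)=\log\frac{\sqrt{1-t^2}}{t}=\log\tan\left(\tfrac\beta2\right)=\operatorname{arctanh}(-\cos\beta)$, the last equality from $\tfrac{1-\cos\beta}{1+\cos\beta}=\tan^2\left(\tfrac\beta2\right)$. Combined with the bound $a_\ast>\tfrac a2-\log\left(\tfrac t{1-t}\right)$ from the first step, this yields $a_\ast>\operatorname{arctanh}(-\cos\beta)$ strictly, so the ray along $c$ does not meet $\ell$ at all, and in particular not on the right. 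The assertion for the ray along $d$ is proved identically, with $W=a\cap c$ replaced by $V=b\cap d$, $F_a$ by $F_b$, ``right'' by ``left'', and $a_\ast$ by the other element of $\{a_1,a_2\}$, which satisfies the same bound by (\ref{eqn:TYPE1a1a2}).

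I expect the only real obstacle to be bookkeeping rather than depth: correctly recognizing $F_a$ as the foot of the perpendicular from $W$, verifying that a crossing can occur only on the $F_b$–side, and keeping the left/right conventions consistent between the $c$–ray and the $d$–ray. Once that is settled, the analytic content collapses to the single identity $\tfrac a2-\log\left(\tfrac t{1-t}\right)=\operatorname{arctanh}(-\cos\beta)$, which exhibits $\partial\mathcal V$ as exactly the locus where the ray along $c$ (resp. $d$) becomes asymptotically parallel to $\ell$. It is worth emphasizing that the argument uses the cusp relation $\tanh\left(\tfrac a2\right)=\cos\left(\tfrac\beta2\right)$ from Lemma~\ref{thm:CUSP} in an essential way: for a hexagon whose data $(a,\beta)$ are not tied to a cusped bigon, the ray along $c$ can genuinely cross the common perpendicular.
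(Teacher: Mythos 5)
Your proof is correct and is essentially the paper's argument: the paper's quantity $r$, computed by hand in the Poincar\'e disk, is exactly your angle-of-parallelism threshold $\operatorname{arctanh}(-\cos\beta)=\log\tan\left(\tfrac{\beta}{2}\right)$, and both proofs conclude from $a_1,a_2>\tfrac a2-\log\left(\tfrac t{1-t}\right)$ via the definition of $\mathcal V$. Your packaging is slightly cleaner in that invoking $\cos\Pi(\rho)=\tanh\rho$ makes the case $\alpha\le\tfrac{\pi}{2}$ (which the paper dispatches separately with the area lemma) part of the same uniform criterion, and it identifies $\partial\mathcal V$ as the asymptotic locus.
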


    Recall the following fact from hyperbolic geometry (See \cite{Ra}).
    \begin{lemma}\label{thm:AREA}
        The area of a hyperbolic triangle (possibly with ideal vertices) is given by $$\pi - \alpha - \beta -\gamma$$ where $\alpha, \beta, \gamma$ are the inner angles. In particular, $\alpha + \beta + \gamma < \pi$.
    \end{lemma}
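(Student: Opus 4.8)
The plan is to prove the formula by the classical Gauss--Bonnet argument, building up from the simplest configuration. I work in the upper half-plane model with area element $dA = y^{-2}\,dx\,dy$; since both area and angles are preserved by isometries, I am free to normalize the position of any one triangle before computing.

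First I would establish the formula for a triangle with exactly one ideal vertex. Applying an isometry, place the ideal vertex at $\infty$, so that the two sides emanating from it are the vertical rays $x = x_1$ and $x = x_2$, and the third side is an arc of the unit semicircle $x^2 + y^2 = 1$. Writing the two finite vertices as $(\cos\theta_1, \sin\theta_1)$ and $(\cos\theta_2, \sin\theta_2)$ with $\theta_1 > \theta_2$, a short tangent-vector computation identifies the interior angles as $\alpha = \pi - \theta_1$ and $\beta = \theta_2$. The area is then
\[
    \int_{\cos\theta_1}^{\cos\theta_2} \int_{\sqrt{1-x^2}}^{\infty} \frac{dy}{y^2}\,dx
    = \int_{\cos\theta_1}^{\cos\theta_2} \frac{dx}{\sqrt{1-x^2}}
    = \theta_1 - \theta_2 = \pi - \alpha - \beta,
\]
which is exactly $\pi - \alpha - \beta - \gamma$ with $\gamma = 0$. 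Letting $\alpha$ or $\beta$ tend to $0$ sends a finite vertex to the ideal boundary and covers the doubly and triply asymptotic cases by the same formula (and by continuity).

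Next I would reduce the general case of three finite vertices $A,B,C$ (angles $\alpha,\beta,\gamma$) to the one-ideal-vertex case by an additivity argument. Extend the geodesic ray from $B$ through $A$ to its ideal endpoint $D$ on the boundary circle. This splits the one-ideal-vertex triangle $BCD$ into the original triangle $ABC$ together with the triangle $ACD$, which also has the single ideal vertex $D$. The angle of $BCD$ at $B$ is still $\beta$ and its angle at $C$ is $\gamma + \angle ACD$; the angle of $ACD$ at $A$ is the supplement $\pi - \alpha$ and its angle at $C$ is $\angle ACD$. Applying the one-ideal-vertex formula to both pieces and using $\mathrm{Area}(ABC) = \mathrm{Area}(BCD) - \mathrm{Area}(ACD)$, the two occurrences of $\angle ACD$ cancel and leave $\mathrm{Area}(ABC) = \pi - \alpha - \beta - \gamma$.

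The only genuinely delicate point is the bookkeeping in this last step: one must check that the ray from $B$ through $A$ does reach the ideal boundary (every complete geodesic has two ideal endpoints) and that $D$ lies on the correct side, so that $ABC$ and $ACD$ genuinely tile $BCD$ with the stated angle relations; the supplementary angle $\pi - \alpha$ at $A$ is the spot where a sign error would otherwise creep in. Once this is arranged the cancellation is automatic, and positivity of the area immediately yields $\alpha + \beta + \gamma < \pi$, establishing the final assertion.
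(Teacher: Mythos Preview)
Your argument is the classical Gauss--Bonnet derivation and is correct: the direct integration over an asymptotic triangle is right (with the angle identifications $\alpha=\pi-\theta_1$, $\beta=\theta_2$ checked via tangent vectors), and the additivity step extending $BA$ to an ideal point $D$ cleanly cancels the auxiliary angle $\angle ACD$ to give $\pi-\alpha-\beta-\gamma$.

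There is nothing to compare against here: in the paper this lemma is not proved but simply quoted as a standard fact from hyperbolic geometry with a reference to Ratcliffe, \emph{Foundations of Hyperbolic Manifolds}. Your write-up supplies exactly the proof one finds in such a reference, so it is entirely appropriate as a self-contained replacement for the citation. The one cosmetic point is that you could phrase the doubly and triply ideal cases directly (they fall out of the same integral with $\theta_1=\pi$ and/or $\theta_2=0$) rather than via a limiting argument, but the continuity appeal is also fine.
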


    \begin{figure}[h]
       \begin{center}
           \scalebox{.6}{\epsffile{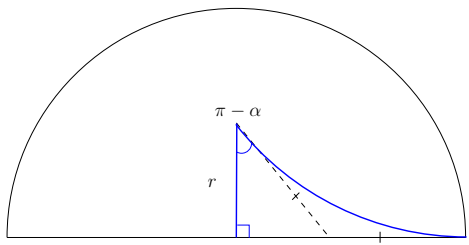}}
       \end{center}
       \caption{Non-intersecting Condition.}
       \label{fig:TYPE1C}
    \end{figure}

    \begin{proof}[Proof of Proposition \ref{thm:TYPE1C}]
        When $t \in \left[\frac{\sqrt{2}}{2}, 1\right)$ (i.e. $\alpha \in \left(0,  \frac{\pi}{2}\right]$), the proposition is obviously true from Lemma \ref{thm:AREA}. Therefore we may assume $t \in \left(\frac{1}{2}, \frac{\sqrt{2}}{2}\right)$ (i.e. $\alpha \in \left(\frac{\pi}{2}, \frac{2\pi}{3}\right)$).

        From Figure \ref{fig:TYPE1C}, the Euclidean length of $r$, which is the least distance to keep away from intersecting, is given by $$ \frac{\cos(\pi-\alpha)}{1+\sin(\pi-\alpha)} = - \frac{\cos(\alpha)}{1+\sin(\alpha)}.$$
        Therefore using the Poincar\'{e} disk model with the hyperbolic metric $$ds^2 = 4 \frac{dx^2+dy^2}{\left(1-(x^2+y^2)\right)^2},$$
        \begin{eqnarray}
            r & = & \int_{0}^{- \frac{\cos(\alpha)}{1+\sin(\alpha)}} \frac{2 dr}{1 - r^2} \notag \\
            & = & \log\left( \frac{1 + \sin(\alpha) - \cos(\alpha)}{1 + \sin(\alpha) + \cos(\alpha)} \right) \notag \\
            & = & \log\left( \frac{1 + 2 t \sqrt{1-t^2} - (2t^2 - 1)}{1 + 2 t \sqrt{1-t^2} + (2t^2 - 1)} \right) \notag\\
            & = & \frac{1}{2}\log(1-t) + \frac{1}{2}\log(1+t) - \log(t).
            \label{eqn:r}
        \end{eqnarray}
        and hence $a_1 > r$ and $a_2 > r$ from (\ref{eqn:TYPE1a1a2}). Therefore the proposition is true as well.
    \end{proof}

    \item
    And last, we are ready to show the existence of Type I hexagons.

    \begin{figure}[h]
       \begin{center}
           \scalebox{.6}{\epsffile{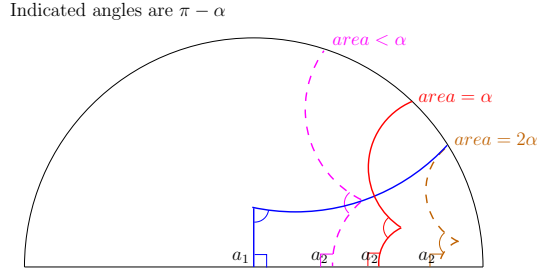}}
       \end{center}
       \caption{Existence of Type I Hexagons.}
       \label{fig:TYPE1E}
    \end{figure}

    \begin{proof}
        Without loss of generality, we may assume $s \leq 0$, then $a_1 \leq a_2$.

        See Figure \ref{fig:TYPE1E}. Since the \textit{blue line} does not intersect the \textit{horizonal line} by Proposition \ref{thm:TYPE1C} and  $a_1 \leq a_2$, there is a \textit{magenta line} such that its vertex is on the \textit{blue line}. They form a quadrilateral with angles $\pi-\alpha, \frac{\pi}{2}, \frac{\pi}{2}$ and some nonzero angle, therefore its area is less than $\alpha$. (When $s = 0$, it is a degenerate quadrilateral with area $0 < \alpha$)

        Continue parallel translating the \textit{magenta line} to the right, then there is a \textit{brown line} such that it intersects the \textit{blue line} at infinity. They form a pentagon with angles $\pi-\alpha, \frac{\pi}{2}, \frac{\pi}{2}, \pi-\alpha$ and $0$, therefore its area is equal to $2\alpha$.

        Therefore there is a unique \textit{red line} between the \textit{magenta line} and the \textit{brown line}, such that the area bounded by it together with the \textit{blue line} is exactly $\alpha$, hence the angle between the \textit{blue line} and the \textit{red line} is $\alpha$.

        By doubling the pentagon formed by the \textit{blue line} and the \textit{red line}, we find the desired Type I hexagon.
    \end{proof}

    \end{itemize}
\subsection{Existence of Type II Hexagons}
    For any $(t,s)$ in
    \begin{equation} II = \left\{ (t,s) \in \mathcal{V} \mid  s < -\frac{1}{2}\log\left(\frac{1+t}{1-t}\right) \mbox{ or } s > \frac{1}{2}\log\left(\frac{1+t}{1-t}\right)\right\}. \label{eqn:TYPE2}\end{equation}

    From (\ref{eqn:V}) and (\ref{eqn:TYPE2}), $t \in \left(\frac{\sqrt{2}}{2}, 1\right)$ (i.e. $\alpha \in \left(0, \frac{\pi}{2}\right)$. And without loss of generality, we assume $s<0$ in the following.

    We are going to construct the hexagon as shown in Figure \ref{fig:TYPE2}.
    \begin{figure}[h]
       \begin{center}
           \scalebox{.6}{\epsffile{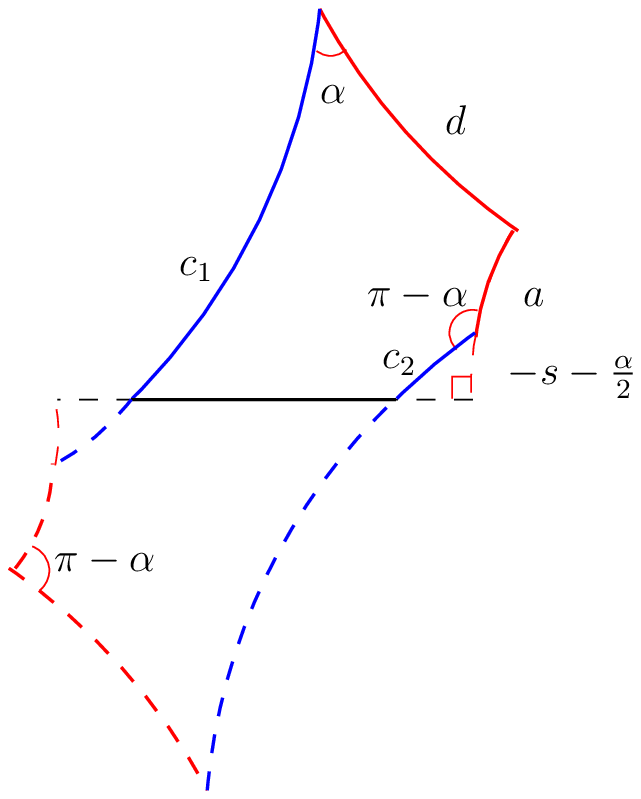}}
       \end{center}
       \caption{Shape of a Type II Hexagon.}
       \label{fig:TYPE2}
    \end{figure}

    \begin{itemize}
        \item
        First of all, we need to construct the right triangle at the right corner of Figure \ref{fig:TYPE2}.
        \begin{prop}
            There exists a right triangle with a side of length $-s-\frac{a}{2}$ and the other adjacent angle being $\alpha$.
        \end{prop}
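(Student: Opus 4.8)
The plan is to realize the desired triangle directly in the upper half plane and reduce its existence to a single inequality between $\alpha$ and the length $-s-\frac{a}{2}$. Set $\ell := -s-\frac{a}{2}$; by the Type II hypothesis (\ref{eqn:TYPE2}) we have $s<-\frac{1}{2}\log\left(\frac{1+t}{1-t}\right)=-\frac{a}{2}$, so $\ell>0$ and the prescribed side length is genuinely positive. Draw a geodesic segment $CA$ of length $\ell$, let $m$ be the geodesic through $C$ perpendicular to $CA$, and from $A$ emanate the geodesic ray $\rho$ making angle $\alpha$ with the segment $AC$. If $\rho$ meets $m$, say at $B$, then the triangle $CAB$ is exactly the one we want: it has a right angle at $C$ (since $m\perp CA$), the angle $\alpha$ at $A$, and the side $CA$ of length $\ell=-s-\frac{a}{2}$ adjacent to both. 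So it remains to show that $\rho$ and $m$ actually intersect in $\mathbb{H}^{2}$; since the foot of the perpendicular from $A$ to $m$ is $C$, the distance from $A$ to $m$ equals $\ell$, and $\rho$ meets $m$ if and only if $\alpha$ is strictly smaller than the angle of parallelism $\Pi(\ell)$, which satisfies $\tan\left(\frac{\Pi(\ell)}{2}\right)=e^{-\ell}$ (see \cite{Ra}).

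It therefore suffices to verify $\tan\left(\frac{\alpha}{2}\right)<e^{-\ell}$, which is equivalent to $\alpha<\Pi(\ell)$ because the two half-angles both lie in $\left(0,\frac{\pi}{2}\right)$. Since $t=\cos\left(\frac{\alpha}{2}\right)$, the left-hand side equals $\frac{\sqrt{1-t^{2}}}{t}$. For the right-hand side, the defining inequality of $\mathcal{V}$ in (\ref{eqn:V}) gives $s>-\log\left(\frac{t}{1-t}\right)$, and combining this with $a=\log\left(\frac{1+t}{1-t}\right)$ from (\ref{eqn:a}) yields
$$ \ell=-s-\frac{a}{2}<\log\left(\frac{t}{1-t}\right)-\frac{1}{2}\log\left(\frac{1+t}{1-t}\right), $$
hence
$$ e^{-\ell}>\frac{1-t}{t}\cdot\sqrt{\frac{1+t}{1-t}}=\frac{\sqrt{1-t^{2}}}{t}=\tan\left(\frac{\alpha}{2}\right). $$
This is the required strict inequality, so $\rho$ meets $m$ and the right triangle exists; it is unique, being determined by its ASA data (the leg $\ell$ and its two adjacent angles $\frac{\pi}{2}$ and $\alpha$).

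Once the existence criterion $\alpha<\Pi(\ell)$ is isolated the computation is routine, so the only real content is the geometric reduction in the first paragraph: recognizing that the obstruction to forming the triangle is precisely the failure of $\rho$ to meet $m$, and that this is governed by the angle of parallelism at distance exactly $\ell$. It is worth noting that the inequality above becomes an equality exactly on the wall $s=-\log\left(\frac{t}{1-t}\right)$ of $\mathcal{V}$, where the triangle degenerates to an ideal one --- so this step also pinpoints where the Type II construction reaches the boundary of $\mathcal{V}$. One can avoid citing the parallelism formula by instead rotating $\rho$ from angle $0$ (where it meets $m$ near $C$) toward the asymptotically parallel position and invoking the intermediate value theorem, but certifying that $\alpha$ stays below that threshold forces the same estimate, so nothing is saved.
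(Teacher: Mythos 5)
Your proof is correct and follows essentially the same route as the paper: both reduce the existence of the triangle to the inequality $-s-\frac{a}{2}<\log\left(\frac{t}{\sqrt{1-t^{2}}}\right)$ coming from the defining inequality $s>-\log\left(\frac{t}{1-t}\right)$ of $\mathcal{V}$. The only cosmetic difference is that you obtain the threshold via the standard angle-of-parallelism formula $\tan\left(\frac{\Pi(\ell)}{2}\right)=e^{-\ell}$, while the paper computes the same critical distance $r'$ by an explicit integral in the Poincar\'e disk model (as in its Type~I argument with $\pi-\alpha$ replaced by $\alpha$).
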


        \begin{proof}
            Since $$ -\log\left(\frac{t}{1-t}\right) < s < -\frac{1}{2}\log\left(\frac{1+t}{1-t}\right)$$ from (\ref{eqn:V}) and (\ref{eqn:TYPE2}) and $$a = \log\left(\frac{1+t}{1-t}\right),$$ from (\ref{eqn:a}),
            \begin{equation}\label{eqn:TYPE2d}
                -s - \frac{a}{2} \in \left(0, \log\left(\frac{t}{1-t}\right) - \frac{1}{2}\log\left(\frac{1+t}{1-t}\right) \right).
            \end{equation}

            Use the same picture as in Figure \ref{fig:TYPE1C} but replacing the angle $\pi-\alpha$ with $\alpha$, the least distance from being intersecting is given by
            \begin{eqnarray}
                r' & = & \log\left( \frac{1 + \sin(\alpha) + \cos(\alpha)}{1 + \sin(\alpha) - \cos(\alpha)} \right) \notag \\
                & = & \log\left( \frac{1 + 2 t \sqrt{1-t^2} + (2t^2 - 1)}{1 + 2 t \sqrt{1-t^2} - (2t^2 - 1)} \right) \notag\\
                & = & \log(t) - \frac{1}{2}\log(1-t) - \frac{1}{2}\log(1+t).
                \label{eqn:r'}
            \end{eqnarray}
            Then $-s - \frac{a}{2} < r'$ by (\ref{eqn:TYPE2d}), hence there exists a unique such right triangle.
        \end{proof}

    \item
        Note in Figure \ref{fig:TYPE2} that $c_1$ is parallel to $c_2$ along the common perpendicular and the geodesic along $d$ does not intersect  the common perpendicular on the left since $\pi - \alpha > \frac{\pi}{2}$. Then we are ready to show the existence of Type II hexagons.

        \begin{figure}[h]
           \begin{center}
               \scalebox{.6}{\epsffile{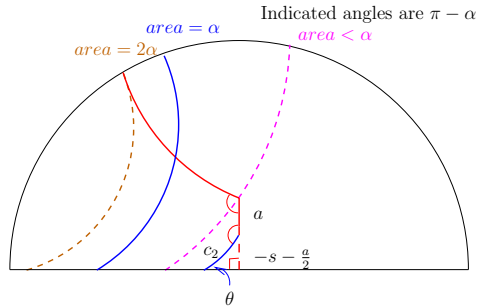}}
           \end{center}
           \caption{Existence of Type II Hexagons.}
           \label{fig:TYPE2E}
        \end{figure}

        \begin{proof}
            See Figure \ref{fig:TYPE2E}. Parallel translating $c_2$ along the \textit{horizontal line} to the left, there is a \textit{magenta line} such that it meets the \textit{red line} at the vertex, together with $c_2$ and the \textit{horizontal line} they form a quadrilateral with angles $\theta, \pi-\theta, \pi-\alpha$ and some nonzero angle, therefore its area is less than $\alpha$.

            Continue parallel translating the \textit{magenta line} to the left, then there is a \textit{brown line} such that it intersects the \textit{red line} at infinity. They form a pentagon with angles $\theta, \pi-\theta, \pi-\alpha, \pi-\alpha$ and $0$, therefore its area is equal to $2\alpha$.

            Therefore there is a unique \textit{blue line} between the \textit{magenta line} and the \textit{brown line}, such that the area inscribed by it together with the \textit{red line} and $c_2$ is exactly $\alpha$, hence the angle between the \textit{blue line} and the \textit{red line} is $\alpha$.

            By doubling the pentagon formed by the \textit{blue line}, the \textit{red line} and $c_2$, we find the desired Type II hexagon.
        \end{proof}
    \end{itemize}

\subsection{Existence of Type III Hexagons}
    For any $(t,s)$ in
    \begin{equation} III = \left\{ (t,s) \in \mathcal{V} \mid  s = \pm \frac{1}{2}\log\left(\frac{1+t}{1-t}\right)\right\}. \label{eqn:TYPE3}\end{equation}

    From (\ref{eqn:V}) and (\ref{eqn:TYPE2}), $t \in \left(\frac{\sqrt{2}}{2}, 1\right)$ (i.e. $\alpha \in \left(0, \frac{\pi}{2}\right)$. And without loss of generality, we assume $s = -\frac{1}{2}\log\left(\frac{1+t}{1-t}\right)$ in the following.

    We are going to construct the hexagon as shown in Figure \ref{fig:TYPE3}.
    \begin{figure}[h]
       \begin{center}
           \scalebox{.6}{\epsffile{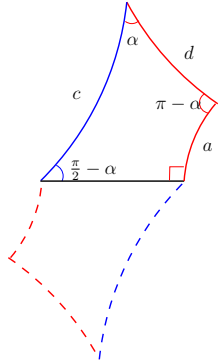}}
       \end{center}
       \caption{Shape of a Type III Hexagon.}
       \label{fig:TYPE3}
    \end{figure}

    \begin{itemize}
        \item
        First of all, like Proposition \ref{thm:TYPE1C} the geodesic ray along $d$ does not intersect the common perpendicular on the left either in this case, since $\pi - \alpha > \frac{\pi}{2}$.

        \item
        Then we are ready to show the existence of Type III hexagons.

        \begin{figure}[h]
           \begin{center}
               \scalebox{.6}{\epsffile{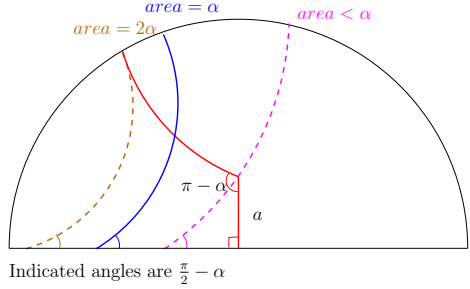}}
           \end{center}
           \caption{Existence of Type III Hexagons.}
           \label{fig:TYPE3E}
        \end{figure}

        \begin{proof}
            See Figure \ref{fig:TYPE3E}. There is a \textit{magenta line} such that it meets the \textit{red line} at the vertex, they form a triangle with angles $\frac{\pi}{2}-\alpha, \frac{\pi}{2}$ and some nonzero angle, therefore its area is less than $\alpha$.

            Continue parallel translating the \textit{magenta line} to the left, then there is a \textit{brown line} such that it intersects the \textit{red line} at infinity. They form a quadrilateral with angles $\frac{\pi}{2}-\alpha, \frac{\pi}{2}, \pi-\alpha$ and $0$, therefore its area is equal to $2\alpha$.

            Therefore there is a unique \textit{blue line} between the \textit{magenta line} and the \textit{brown line}, such that the area inscribed by it together with the \textit{red line} is exactly $\alpha$, hence the angle between the \textit{blue line} and the \textit{red line} is $\alpha$.

            By doubling the quadrilateral formed by the \textit{blue line} and the \textit{red line}, we find the desired Type III hexagon.
        \end{proof}
    \end{itemize}

    This concludes the proof of Lemma \ref{thm:EXISTENCE}.

\section{Properness of the Length Function}
\label{sec:PROPERNESS}
In this section, we will prove \textbf{Lemma \ref{thm:PROPERNESS}}, i.e.

\bigskip
\noindent \textit{The pullback of the length function $j^*(l): \mathcal{V} \rightarrow \mathbb{R}$ is proper, where $l$ is given by $a+b+c+d$.}
\bigskip

If suffices to show that if any sequence $\{(t_n, s_n)\} \subset \mathcal{V}$ leaves any compact set of $\mathcal{V}$, then $j^*(l)\left((t_n, s_n)\right) \rightarrow \infty$.

We may assume $\{t_n\}$ converges to $\hat{t} \in \left[\frac{1}{2}, 1\right]$. Then there are four cases to consider
\begin{itemize}
    \item $\hat{t} = 1$
    \item $\hat{t} \in \left( \frac{\sqrt{2}}{2}, 1\right)$
    \item $\hat{t} \in \left[ \frac{1}{2}, \frac{\sqrt{2}}{2}\right)$
    \item $\hat{t} = \frac{\sqrt{2}}{2}$
\end{itemize}

\subsection{Case: $\hat{t} = 1$}
\begin{proof}
    Since $t_n \rightarrow \hat{t} = 1$, from (\ref{eqn:a}) $$a_n = \log\left(\frac{1+t_n}{1-t_n}\right) \rightarrow \infty,$$
    and note that  $j^*(l)\left((t_n, s_n)\right) > a_n$ hence
    $$ j^*(l)\left((t_n, s_n)\right) \rightarrow \infty.$$
\end{proof}

\subsection{Case: $\hat{t} \in \left( \frac{\sqrt{2}}{2}, 1\right)$}
\begin{proof}
    In this case, without loss of generality we may assume $s_n < 0$, $\{(t_n, s_n)\} \subset II$ and $\{s_n\}$ converges to $\hat{s} = -\log\left(\frac{\hat{t}}{1-\hat{t}}\right)$.

    \begin{figure}[h]
        \begin{center}
            \scalebox{.6}{\epsffile{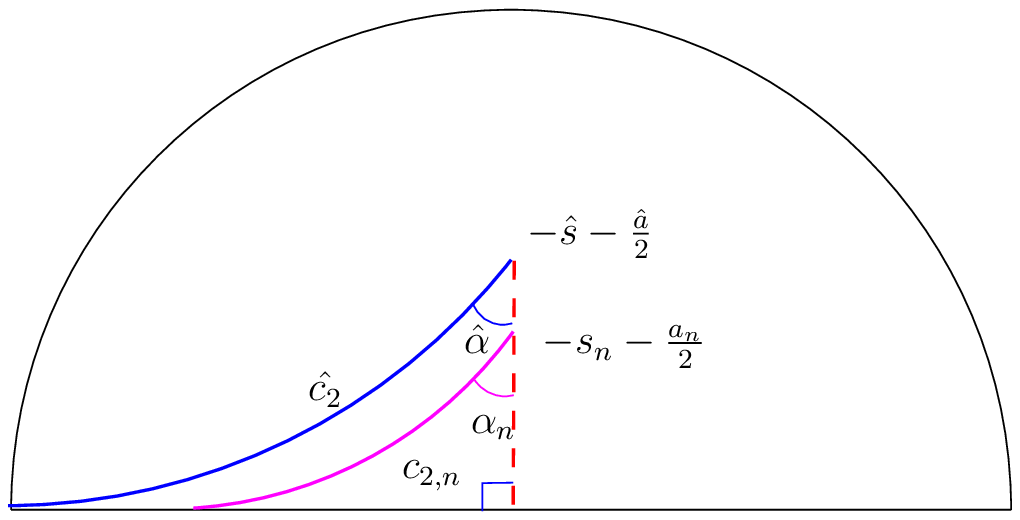}}
        \end{center}
        \caption{$t_n \rightarrow \hat{t} \in \left( \frac{\sqrt{2}}{2}, 1\right)$.}
        \label{fig:PROPER2}
    \end{figure}

    We consider again the right triangle as in Figure \ref{fig:TYPE2}. We lift all the geodesic segments $-s_n - \frac{a_n}{2}$'s on the \textit{vertical line} at the origin as in Figure \ref{fig:PROPER2}. Since
    $$ -\hat{s} - \frac{\hat{a}}{2} = \log\left(\frac{\hat{t}}{1-\hat{t}}\right) - \frac{1}{2}\log\left(\frac{1+\hat{t}}{1-\hat{t}}\right) = r' > 0$$ from (\ref{eqn:r'}).
    The geodesic along $\hat{c_2}$ is intersecting the \textit{horizontal line} at infinity, hence $\hat{c_2} = \infty$.

    Since $(t_n, s_n) \rightarrow (\hat{t}, \hat{s})$, $$ \alpha_n \rightarrow \hat{\alpha}$$ $$ -s_n - \frac{a_n}{2} \rightarrow -\hat{s} - \frac{\hat{a}}{2}$$ then $$ c_{2,n} \rightarrow \hat{c_2} = \infty.$$ Since  $j^*(l)\left((t_n, s_n)\right) > c_n > c_{2,n}$,
    $$ j^*(l)\left((t_n, s_n)\right) \rightarrow \infty.$$
\end{proof}

\subsection{Case: $\hat{t} \in \left[ \frac{1}{2}, \frac{\sqrt{2}}{2}\right)$ }
\begin{proof}
    In this case, without loss of generality we may assume $s_n \leq 0$, $\{(t_n, s_n)\} \subset I$ and $\{s_n\}$ converges to $\hat{s} = -\log\left(\frac{\hat{t}}{1-\hat{t}}\right)$.

    \begin{figure}[h]
        \begin{center}
            \scalebox{.6}{\epsffile{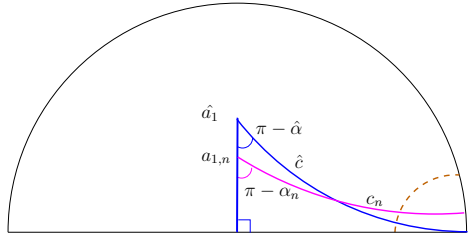}}
        \end{center}
        \caption{$t_n \rightarrow \hat{t} \in \left[ \frac{1}{2}, \frac{\sqrt{2}}{2}\right)$.}
        \label{fig:PROPER4}
    \end{figure}

    We lift all the geodesic segments $a_{1,n}$'s on the \textit{vertical line} at the origin as in Figure \ref{fig:PROPER4}. Since $$\hat{a_1} = \frac{\hat{a}}{2} + \hat{s} = \frac{1}{2}\log\left(\frac{1+\hat{t}}{1-\hat{t}}\right) - \log\left(\frac{\hat{t}}{1-\hat{t}}\right) = r > 0$$ from (\ref{eqn:TYPE1a1a2}) and (\ref{eqn:r}), the geodesic along $\hat{c}$ intersects the \textit{horizontal line} at infinity, hence $\hat{c} = \infty$.

    For any $N>0$, we can choose a \textit{brown line} geodesic, which is perpendicular to the \textit{horizontal line}, such that the distance between the \textit{vertical line} and the \textit{brown line} is greater than $N$.

    Since $(t_n, s_n) \rightarrow (\hat{t}, \hat{s})$,
    $$ \alpha_n \rightarrow \hat{\alpha}$$
    $$ a_{1,n} \rightarrow \hat{a_1}.$$
    Therefore the geodesic along $c_n$ has to intersect the \textit{brown line}, when $n$ is large enough.

    Note that for any Type II hexagon, $c$ and $a_2$ are parallel (See Figure \ref{fig:TYPE1}). Therefore the geodesic along $a_{2,n}$ has to be on the right hand side of the \textit{brown line}. Therefore $a_{1,n} + c_n + d_n + a_{2, n} > N$ by triangular inequality. Let $N \rightarrow \infty$, $$a_{1,n} + c_n + d_n + a_{2, n} \rightarrow \infty.$$

    Since $j^*(l)\left((t_n, s_n)\right) > a_n + c_n + d_n = a_{1,n} + c_n + d_n + a_{2, n}$,
    $$ j^*(l)\left((t_n, s_n)\right) \rightarrow \infty.$$
\end{proof}

\subsection{Case: $\hat{t} = \frac{\sqrt{2}}{2}$}
\begin{proof}
    In this case, without loss of generality we may assume $s_n < 0$, $\{(t_n, s_n)\}$ is either completely contained in $I$, $II$ or $III$ and $\{s_n\}$ converges to $$\hat{s} = -\log\left(\frac{\hat{t}}{1-\hat{t}}\right) = -\log\left(\sqrt{2}+1\right).$$

    From (\ref{eqn:a}) $$ \hat{\alpha} = 2 \cos^{-1}(\hat{t}) = \frac{\pi}{2}$$ $$ \hat{a} = \log\left(\frac{1+\hat{t}}{1-\hat{t}}\right) = 2 \log\left(\sqrt{2}+1\right).$$

    \begin{itemize}
        \item
            If $\{(t_n, s_n)\} \subset I$, from (\ref{eqn:TYPE1a1a2}) $$ \hat{a_1} = \frac{\hat{a}}{2} + \hat{s} = 0.$$ Therefore
            $$ \alpha_n \rightarrow \frac{\pi}{2}$$
            $$ a_{1,n} \rightarrow 0.$$

            \begin{figure}[h]
                \begin{center}
                    \scalebox{.6}{\epsffile{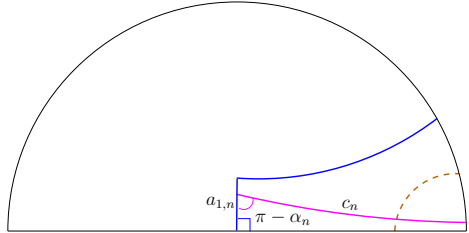}}
                \end{center}
                \caption{$\alpha_n \rightarrow \frac{\pi}{2}$ and $a_{1,n} \rightarrow 0$.}
                \label{fig:PROPER31}
            \end{figure}

            We lift all the geodesic segments $a_{1,n}$'s on the \textit{vertical line} at the origin as in Figure \ref{fig:PROPER31}. For any $N>0$, we can choose a \textit{brown line} geodesic, which is perpendicular to the \textit{horizontal line}, such that the distance between the \textit{vertical line} and the \textit{brown line} is greater than $N$.

            Then when $n$ is large enough, the geodesic along $c_n$ has to intersect the \textit{brown line}. Since the area bounded by $c_n$ and the \textit{brown line} approaches to $0$, the geodesic along $a_{2,n}$ has to be on the right hand side of the \textit{brown line} when $n$ is even larger enough. Therefore $a_{1,n} + c_n + d_n + a_{2, n} > N$ by triangular inequality. Let $N \rightarrow \infty$, $$a_{1,n} + c_n + d_n + a_{2, n} \rightarrow \infty.$$

            Since $j^*(l)\left((t_n, s_n)\right) > a_n + c_n + d_n = a_{1,n} + c_n + d_n + a_{2, n}$,
            $$ j^*(l)\left((t_n, s_n)\right) \rightarrow \infty.$$

        \item
            If $\{(t_n, s_n)\} \subset II$, $$-\hat{s} - \frac{\hat{a}}{2} = 0.$$ Therefore
            $$ \alpha_n \rightarrow \frac{\pi}{2} $$
            $$ a_n + (-s_n, \frac{a_n}{2}) \rightarrow 2 \log\left(\sqrt{2}+1\right). $$

            \begin{figure}[h]
                \begin{center}
                    \scalebox{.6}{\epsffile{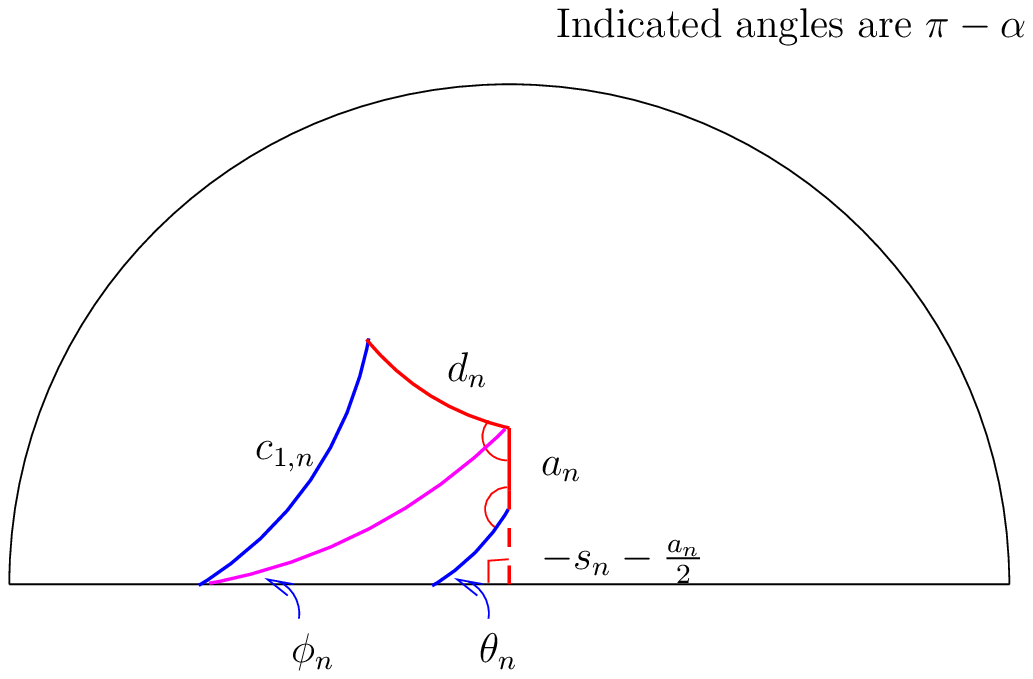}}
                \end{center}
                \caption{$\phi_n \rightarrow 0$ and $a_n + (-s_n, \frac{a_n}{2}) \rightarrow 2 \log\left(\sqrt{2}+1\right)$.}
                \label{fig:PROPER32}
            \end{figure}

            We lift all the geodesic segments $-s_n - \frac{a_n}{2}$'s on the \textit{vertical line} at the origin as in Figure \ref{fig:PROPER32}. Since $\phi_n < \theta_n < \pi - \alpha_n - \frac{\pi}{2}$, $$\phi_n \rightarrow 0.$$

            Consider the right triangle bounded by the \textit{magenta line}, the \textit{red line} and the \textit{horizontal line}. The length of the \textit{magenta line} approaches to infinity by the \textit{Law of Sine}, hence $$c_{1,n} + d_n \rightarrow \infty$$ by triangular inequality. Since  $j^*(l)\left((t_n, s_n)\right) > c_n + d_n > c_{1,n} + d_n$,
            $$ j^*(l)\left((t_n, s_n)\right) \rightarrow \infty.$$

        \item
            If $\{(t_n, s_n)\} \subset III$, See Figure \ref{fig:PROPER33}. The same argument above can be applied to this case as well.
            \begin{figure}[h]
                \begin{center}
                    \scalebox{.6}{\epsffile{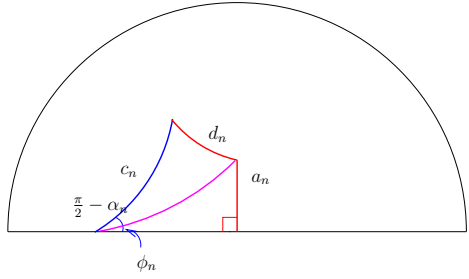}}
                \end{center}
                \caption{$\phi_n \rightarrow 0$ and $a_n \rightarrow 2 \log\left(\sqrt{2}+1\right)$.}
                \label{fig:PROPER33}
            \end{figure}
    \end{itemize}
\end{proof}

This concludes the properness of $j^*(l)$.

\section{The Hyperbolic Structure with Minimal Length}

Let $$\mathcal{A} := \left\{ (t,0) \mid t \in \left(\frac{1}{2}, 1\right)\right\} \subset \mathcal{V}.$$ Since $j^*(l)$ has a unique minimum and it is even on $s$, the minimum is obtained at some $(t_0,0) \in \mathcal{A}$.

In fact, $j^*(l)\mid_{\mathcal{A}}$ is quite explicit.

\begin{thm}
$j^*(l)\mid_{\mathcal{A}} : \left(\frac{1}{2}, 1\right) \rightarrow \mathbb{R}$ is given by
$$ t \mapsto 2\log\left( \frac{\sqrt{5t^2-1}+2t^2}{(2t-1)(1-t)}\right).$$
\label{thm:LENGTH}
\end{thm}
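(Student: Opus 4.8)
The plan is to reduce the statement to an explicit computation of the two side lengths $c$ and $d$ of the hexagon $j(t,0)$. Since $0<\tfrac12\log\frac{1+t}{1-t}$ for $t\in(\tfrac12,1)$, the point $(t,0)$ lies in the interior of region $I$, so $j(t,0)$ is a Type~I hexagon with $a_{1}=a_{2}=\tfrac a2$; by (\ref{eqn:a}) it has $\alpha=\beta=2\cos^{-1}t$ and $a=b=\log\frac{1+t}{1-t}$, hence
$$ j^{*}(l)\mid_{\mathcal A}(t)\;=\;a+b+c+d\;=\;2\log\frac{1+t}{1-t}+c+d , $$
and everything comes down to finding $c+d$ as a function of $t$.

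First I would realize the hexagon in the upper half-plane, normalized (as in Section~\ref{sec:EXISTENCE}) so that the common perpendicular $\mu$ of the opposite sides $a$ and $b$ runs along the imaginary axis. Because the hexagon has equal opposite sides and the prescribed angle pattern, it is invariant under the half-turn about the point of $\mu$ midway between $a$ and $b$; and because $s=0$, the foot of $\mu$ on $a$ is the midpoint of $a$ (and, by the half-turn, likewise on $b$). Using these symmetries one can locate all six vertices in terms of a single remaining unknown — say the length $m$ of the segment of $\mu$ inside the hexagon, or equivalently $t=\cos(\alpha/2)$ together with one auxiliary length — assembling the hexagon exactly as in Section~\ref{sec:EXISTENCE}; then the requirement that the prescribed angles $\pi-\beta,\alpha,\pi-\beta,\pi-\alpha,\beta,\pi-\alpha$ all be realized forces one scalar equation in that unknown. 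I expect this equation, after clearing hyperbolic functions (or, in $e^{m}$-type variables, clearing exponentials), to become a quadratic; its discriminant is where $\sqrt{5t^{2}-1}$ (equivalently $\sqrt{\tfrac{3+5\cos\alpha}{2}}$) enters, and only one root will be geometrically admissible.

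Once the hexagon is solved, $c$ and $d$ are read off from the vertex coordinates via $\cosh d(z,w)=1+\frac{|z-w|^{2}}{2\,\mathrm{Im}\,z\,\mathrm{Im}\,w}$ (or by the hyperbolic laws of sines and cosines applied to the triangles into which a diagonal cuts the hexagon), and one adds $2a=2\log\frac{1+t}{1-t}$. The final simplification uses $\cos\alpha=2t^{2}-1$, $\sin^{2}\tfrac\alpha2=1-t^{2}$, and the factorization $5t^{2}-1-4t^{4}=(2t-1)(2t+1)(1-t)(1+t)$ — so in particular $\bigl(\sqrt{5t^{2}-1}+2t^{2}\bigr)\bigl(\sqrt{5t^{2}-1}-2t^{2}\bigr)=(2t-1)(2t+1)(1-t)(1+t)$ — which is what makes the resulting pile of logarithms and radicals collapse to
$$ j^{*}(l)\mid_{\mathcal A}(t)\;=\;2\log\!\left(\frac{\sqrt{5t^{2}-1}+2t^{2}}{(2t-1)(1-t)}\right). $$

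The main obstacle is this computation itself: keeping the Euclidean bookkeeping (the angles, which side of $\mu$ each vertex lies on, which root of the quadratic) straight, and then recognizing the algebraic identity that produces the compact closed form, since the intermediate expressions are much messier than the answer. As a sanity check I would verify that the right-hand side is real and positive for $t\in(\tfrac12,1)$ — both $\sqrt{5t^{2}-1}+2t^{2}$ and $(2t-1)(1-t)$ are positive there — and that it tends to $+\infty$ as $t\to\tfrac12^{+}$ and as $t\to1^{-}$, consistent with Lemma~\ref{thm:PROPERNESS}. Should the hexagon bookkeeping prove unwieldy, an alternative is to extract the holonomies $\rho(A),\rho(B)\in PSL(2,\mathbb R)$ directly from the normalized hexagon, compute $\operatorname{tr}\rho(A^{3}B^{2})$, and use $l_{\gamma}=2\cosh^{-1}\!\bigl(\tfrac12|\operatorname{tr}\rho(A^{3}B^{2})|\bigr)$; this trades hyperbolic trigonometry for polynomial algebra in the trace coordinates, where $5t^{2}-1$ should again surface through a discriminant.
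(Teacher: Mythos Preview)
Your outline is workable in principle, but the paper's proof is both different and considerably cleaner. The key observation you are missing is that when $s=0$ the hexagon has more symmetry than the half-turn you invoke: since the foot of $\mu$ on $a$ (and on $b$) is the midpoint, reflection across $\mu$ is also an isometry of the hexagon, which in particular forces $c=d$. Combining this with the half-turn gives a $\mathbb{Z}_2\times\mathbb{Z}_2$ action, so the hexagon is four congruent copies of a single ``quarter'' piece. The paper drops one further perpendicular inside that quarter to split it into two Lambert quadrilaterals (one with non-right angle $\pi-\alpha$, the other with non-right angle $\alpha/2$), and then simply applies the two standard Lambert-quadrilateral identities quoted from Ratcliffe. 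This yields $\cosh^{2}(c_1)=\dfrac{t^{2}}{(4t^{2}-1)(1-t^{2})}$ and $\cosh^{2}(c_2)=\dfrac{5t^{2}-1}{(4t^{2}-1)(1-t^{2})}$ directly, and $c=c_1+c_2$ together with $j^{*}(l)=2a+2c$ finishes the computation.

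So there is no quadratic to solve and no discriminant: the radical $\sqrt{5t^{2}-1}$ enters simply as $\cosh(c_2)$, not as the root of an equation in an auxiliary unknown. Your coordinate approach (place vertices in $\mathbb{H}$, impose the angle conditions, solve for the free parameter) would eventually reproduce the answer, and your factorization $5t^{2}-1-4t^{4}=(4t^{2}-1)(1-t^{2})$ is exactly what makes $c_1+c_2$ collapse; but it trades a short trigonometric argument for a long bookkeeping exercise. The holonomy/trace alternative you mention is a genuine third route and would also succeed, at the cost of first extracting $\rho(A),\rho(B)$ from the hexagon.
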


\begin{cor}
    $j^*(l)$ has a unique minimal at $t_0 = \frac{3\sqrt{5}}{10}$. Hence the hyperbolic structure with the minimal $\gamma$ length is given by the hexagon with
    $$ \alpha = \beta = 2 \cos^{-1}\left(\frac{3\sqrt{5}}{10}\right)$$
    $$ a = b = \log\left(\frac{29+12\sqrt{5}}{11}\right)$$
    $$ c = d = \log\left(\frac{21+8\sqrt{5}}{11}\right).$$
\end{cor}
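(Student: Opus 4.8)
The plan is to reduce to a one-variable minimization via Theorem \ref{thm:LENGTH} and then compute the critical point explicitly. As noted at the start of this section, $j^*(l)$ is even in $s$ and has a unique minimum (by properness, Lemma \ref{thm:PROPERNESS}, together with the strict convexity of $l_\gamma$ recalled in the Introduction, transported through the commuting diagram in the proof of Theorem \ref{thm:MAIN}); hence that minimum lies on $\mathcal{A}$ and coincides with the minimum of the smooth function
$$ F(t) \;:=\; j^*(l)|_{\mathcal{A}}(t) \;=\; 2\log\!\left( \frac{\sqrt{5t^2-1}+2t^2}{(2t-1)(1-t)}\right), \qquad t \in \left(\tfrac12,1\right).$$
Since $F(t)\to+\infty$ both as $t\to\tfrac12^+$ (the factor $2t-1\to 0$ while the numerator tends to $1$) and as $t\to 1^-$ (the factor $1-t\to 0$), the minimum of $F$ is attained at an interior critical point. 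It therefore suffices to solve $F'(t)=0$ on $\left(\tfrac12,1\right)$ and recognize $t_0=\tfrac{3\sqrt5}{10}$ as the relevant root.

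To handle $F'(t)=0$ I would substitute $u:=\sqrt{5t^2-1}$, so that $t^2=\tfrac{u^2+1}{5}$ and $u\,\tfrac{du}{dt}=5t$. The denominator occurring inside the logarithm factors pleasantly,
$$ u+2t^2 \;=\; \frac{2u^2+5u+2}{5} \;=\; \frac{(2u+1)(u+2)}{5},$$
so that after the substitution $F'(t)=0$ becomes a rational equation in $t$ and $u$; clearing denominators and using $u^2=5t^2-1$ yields a single polynomial equation in $t$ of modest degree. Into this one plugs $t_0=\tfrac{3\sqrt5}{10}$, equivalently $u_0=\tfrac{\sqrt5}{2}$ (indeed $5t_0^2=\tfrac94$, so $u_0^2=\tfrac54$), and verifies that it vanishes, and that $t_0$ is the only root lying in $\left(\tfrac12,1\right)$; combined with the properness just noted, $t_0$ is then the unique minimizer of $F$, hence of $j^*(l)$. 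This algebraic step — producing the polynomial cleanly and controlling its roots in the interval — is the main obstacle; the substitution $u=\sqrt{5t^2-1}$ with the factorization above is what keeps it manageable.

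It remains to read off the hexagon. From (\ref{eqn:a}), $\alpha=\beta=2\cos^{-1}(t_0)=2\cos^{-1}\!\left(\tfrac{3\sqrt5}{10}\right)$, and
$$ a=b=\log\frac{1+t_0}{1-t_0}=\log\frac{10+3\sqrt5}{10-3\sqrt5}=\log\frac{(10+3\sqrt5)^2}{55}=\log\frac{29+12\sqrt5}{11}.$$
On $\mathcal{A}$ the condition $s=0$ produces a reflective symmetry of the hexagon interchanging $c$ and $d$, so $c=d$; then $a+b+c+d=2a+2c=F(t_0)$ gives
$$ c=d=\tfrac12 F(t_0)-a=\log\frac{\sqrt{5t_0^2-1}+2t_0^2}{(2t_0-1)(1+t_0)}=\log\frac{\tfrac{\sqrt5}{2}+\tfrac{9}{10}}{\left(\tfrac{3\sqrt5-5}{5}\right)\!\left(\tfrac{10+3\sqrt5}{10}\right)}=\log\frac{21+8\sqrt5}{11},$$
where $5t_0^2=\tfrac94$ was used to evaluate $\sqrt{5t_0^2-1}=\tfrac{\sqrt5}{2}$ and the last equality is a rationalization of surds. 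This yields all the stated values.
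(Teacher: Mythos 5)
Your proposal is correct and follows the same route the paper intends: the corollary is meant to be read off by minimizing the explicit formula of Theorem \ref{thm:LENGTH} on $\mathcal{A}$ (using the remark that the unique minimum is even in $s$ and hence lies on $\mathcal{A}$), which is exactly what you do. The algebraic step you leave as a plan does close as predicted: with $u=\sqrt{5t^2-1}$ the equation $F'(t)=0$ reduces to $2t(3t-2)u=-10t^3+9t-3$, whose squared form factors as $(20t^2-9)\left((2t-1)(t-1)\right)^2=0$, so $t_0=\frac{3\sqrt{5}}{10}$ is the unique critical point in $\left(\frac12,1\right)$, and your evaluations of $a$ and $c$ at $t_0$ match the stated values.
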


We need the following Lemmas from \cite{Ra}.
\begin{lemma}
    Let $Q$ be a hyperbolic convex quadrilateral with two adjacent right angles, opposite angles $\alpha$, $\beta$, and sides of length $c$, $d$ between $\alpha$, $\beta$ and the right angles, respectively. Then
    $$ \cosh(c) = \frac{\cos(\alpha)\cos(\beta) + \cosh(d)}{\sin(\alpha) \sin(\beta)}.$$
\end{lemma}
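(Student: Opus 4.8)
The identity is a standard piece of hyperbolic trigonometry (it is the form quoted in \cite{Ra}); here is the route I would take to prove it. The plan is to pass to the hyperboloid model, realizing $\mathbb{H}^{2}$ as the upper sheet of $\{x\in\mathbb{R}^{2,1}\mid\langle x,x\rangle=-1\}$, and to attach to each side of $Q$ the unit spacelike \emph{pole} of the geodesic line carrying it, always chosen to point out of $Q$. I would then invoke three standard facts about poles. (i) If $x,y$ are the poles of two sides meeting at a vertex with interior angle $\theta$, then $\langle x,y\rangle=-\cos\theta$. (ii) If $x,y$ are the poles of two disjoint geodesics whose common perpendicular has length $\ell$, then $\langle x,y\rangle=\pm\cosh\ell$, the sign being $-$ exactly when the half-planes $\{\langle\,\cdot\,,x\rangle\le0\}$ and $\{\langle\,\cdot\,,y\rangle\le0\}$ together cover $\mathbb{H}^{2}$, which is the situation for the outward poles of two opposite sides of a convex quadrilateral. (iii) If two geodesics with poles $x,z$ cross at angle $\theta$, then $\langle x\boxtimes z,\,x\boxtimes z\rangle=\langle x,z\rangle^{2}-1=-\sin^{2}\theta$, so their intersection point is $\pm(x\boxtimes z)/\sin\theta$, where $\boxtimes$ is the Lorentzian cross product; more generally the Lagrange-type identity $\langle x\boxtimes z,\,y\boxtimes z\rangle=\langle x,z\rangle\langle y,z\rangle-\langle x,y\rangle$ holds for a unit spacelike $z$.

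Next I would fix notation, writing $Q=P_{1}P_{2}P_{3}P_{4}$ with the right angles at $P_{2},P_{3}$, the angle $\alpha$ at $P_{1}$, the angle $\beta$ at $P_{4}$, the side $P_{2}P_{3}$ of length $d$, and the side $P_{1}P_{4}$ of length $c$. The right angles at $P_{2}$ and $P_{3}$ make the geodesic through $P_{2}P_{3}$ a common perpendicular of the lines $\Lambda\supset P_{1}P_{2}$ and $\Lambda'\supset P_{3}P_{4}$, so these lines are disjoint with common-perpendicular length $d$; as they carry opposite sides of the convex region $Q$, fact (ii) gives $\langle u,u'\rangle=-\cosh d$ for their outward poles $u,u'$. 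Writing $w$ for the outward pole of $M\supset P_{1}P_{4}$, fact (i) at $P_{1}$ and at $P_{4}$ gives $\langle u,w\rangle=-\cos\alpha$ and $\langle u',w\rangle=-\cos\beta$. By fact (iii) the vertices $P_{1}=\Lambda\cap M$ and $P_{4}=\Lambda'\cap M$ are $\pm(u\boxtimes w)/\sin\alpha$ and $\pm(u'\boxtimes w)/\sin\beta$, with signs chosen to land on the upper sheet, and the Lagrange identity gives
$$\langle u\boxtimes w,\,u'\boxtimes w\rangle=\langle u,w\rangle\langle u',w\rangle-\langle u,u'\rangle=\cos\alpha\cos\beta+\cosh d .$$
Therefore $\langle P_{1},P_{4}\rangle=\pm(\cos\alpha\cos\beta+\cosh d)/(\sin\alpha\sin\beta)$; since $\cos\alpha\cos\beta+\cosh d>0$ while $\langle P_{1},P_{4}\rangle=-\cosh c\le-1$ (the hyperbolic distance from $P_{1}$ to $P_{4}$ being $c$), the sign must be $-$, and this is exactly the claimed formula.

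The step I expect to be the main obstacle is the sign bookkeeping: one must check, using the convexity and the ``betweenness'' of $Q$, that the outward poles of the two opposite disjoint sides satisfy $\langle u,u'\rangle=-\cosh d$ rather than $+\cosh d$ (which is where convexity genuinely enters---the other sign would produce $(\cosh d-\cos\alpha\cos\beta)/(\sin\alpha\sin\beta)$ instead), and one must pick the correct sheet for the cross-product representatives of $P_{1}$ and $P_{4}$. Once those are settled, only the Lagrange identity is left, and it is routine. If one prefers to avoid the hyperboloid model, an alternative is to cut $Q$ along the diagonal $P_{2}P_{4}$ into the right triangle $P_{2}P_{3}P_{4}$ and the triangle $P_{1}P_{2}P_{4}$, apply the elementary right-triangle relations in the former and the hyperbolic laws of sines and cosines in the latter, and then eliminate the diagonal $P_{2}P_{4}$ together with the two angles it creates at $P_{2}$ and $P_{4}$; this reaches the same identity through a noticeably longer computation.
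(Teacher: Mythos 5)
Your argument is correct, and it checks out in detail: for the outward poles $u,u'$ of the lines carrying the sides $P_1P_2$ and $P_3P_4$ one indeed has $\langle u,u'\rangle=-\cosh d$ (the two closed half-planes containing $Q$ cover $\mathbb{H}^2$ because the slab between the two disjoint lines lies in both), the vertex conditions give $\langle u,w\rangle=-\cos\alpha$, $\langle u',w\rangle=-\cos\beta$, and the Lorentzian Lagrange identity then yields $\langle P_1,P_4\rangle=\pm(\cos\alpha\cos\beta+\cosh d)/(\sin\alpha\sin\beta)$, with the sign forced to be negative since $\langle P_1,P_4\rangle=-\cosh c\le-1$ and the numerator is positive; this is exactly the stated formula (note that your reading of the statement --- $c$ the side joining the vertices with angles $\alpha$ and $\beta$, $d$ the side joining the two right-angled vertices --- is the one that makes the formula true, as the analogous identity for the two ``legs'' is false). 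Be aware, however, that there is no in-paper proof to compare against: the paper simply quotes this lemma, together with the Lambert-quadrilateral relation $\cos\gamma=\sinh a\sinh b$, from \cite{Ra} and uses them as black boxes in the proof of Theorem \ref{thm:LENGTH}. So your contribution is a self-contained derivation rather than an alternative to anything in the paper; it is in the same spirit as the Lorentzian vector-algebra derivations of such quadrilateral identities in \cite{Ra}, while your suggested fallback (cutting $Q$ along the diagonal $P_2P_4$ and eliminating the diagonal and its two angles via the hyperbolic laws of sines and cosines) is the more elementary but longer route.
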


\begin{lemma}
    Let $Q$ be a hyperbolic convex quadrilateral with three right angles and fourth angle $\gamma$, and let $a$, $b$ the lengths of sides opposite the angle $\gamma$. Then
    $$ \cos(\gamma) = \sinh(a) \sinh(b).$$
\end{lemma}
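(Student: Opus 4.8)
The plan is to prove the identity by cutting the Lambert quadrilateral along a diagonal into two hyperbolic right triangles and then eliminating the auxiliary data with classical right-triangle trigonometry. Label the vertices $A,B,C,D$ cyclically so that the three right angles sit at $A,B,C$ and the fourth angle $\gamma$ sits at $D$; the two sides opposite $\gamma$ are then $AB$ and $BC$, which meet at the vertex $B$ diagonally opposite $D$, so $a = AB$ and $b = BC$. I would draw the diagonal $BD$. By convexity this diagonal stays inside $Q$, so it splits the right angle at $B$ into $\beta_1 = \angle ABD$ and $\beta_2 = \angle DBC$ with $\beta_1 + \beta_2 = \tfrac{\pi}{2}$, and splits the angle at $D$ into $\delta_1 = \angle ADB$ and $\delta_2 = \angle BDC$ with $\delta_1 + \delta_2 = \gamma$. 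This produces two right triangles $ABD$ and $CBD$ sharing the common hypotenuse $h = BD$, with right angles at $A$ and at $C$ respectively.

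Next I would record the standard relations for a hyperbolic right triangle (taken as classical, e.g. from \cite{Ra}, and logically independent of the identity being proved). Writing $p = AD$ and $q = CD$ for the remaining two sides, triangle $ABD$ gives $\sin\delta_1 = \sinh a/\sinh h$, $\cos\delta_1 = \tanh p/\tanh h$, $\cosh h = \cosh a\cosh p$, and triangle $CBD$ gives the mirror relations $\sin\delta_2 = \sinh b/\sinh h$, $\cos\delta_2 = \tanh q/\tanh h$, $\cosh h = \cosh b\cosh q$. The complementarity $\beta_2 = \tfrac{\pi}{2}-\beta_1$, fed through $\cos\beta_2 = \sin\beta_1$ and $\sin\beta_2 = \cos\beta_1$ together with $\cos\beta_1 = \tanh a/\tanh h$, $\sin\beta_1 = \sinh p/\sinh h$ and their analogues in $CBD$, is the engine of the proof: it forces $\sinh p = \cosh h\,\tanh b$ and $\sinh q = \cosh h\,\tanh a$, equivalently $\tanh p = \cosh a\,\tanh b$ and $\tanh q = \cosh b\,\tanh a$.

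Finally I would expand $\cos\gamma = \cos(\delta_1+\delta_2) = \cos\delta_1\cos\delta_2 - \sin\delta_1\sin\delta_2$ and substitute. The numerator product collapses, $\tanh p\,\tanh q = (\cosh a\,\tanh a)(\cosh b\,\tanh b) = \sinh a\,\sinh b$, while setting $H = \cosh h$ gives $\tanh^2 h = (H^2-1)/H^2$ and $\sinh^2 h = H^2-1$, so that
$$\cos\gamma = \frac{\sinh a\,\sinh b\,H^2}{H^2-1} - \frac{\sinh a\,\sinh b}{H^2-1} = \sinh a\,\sinh b,$$
and the hyperbolic denominators telescope exactly. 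The only real subtlety is bookkeeping: ensuring the diagonal genuinely lies inside $Q$ so that the two angle sums are honest (this is where convexity is used), and matching each acute angle to the correct opposite leg in the right-triangle formulas. As an alternative entirely in the computational spirit of the rest of the paper, one could instead place $B$ at $i$ in the upper half-plane with the two sides $a,b$ running along the imaginary axis and the unit semicircle (which meet orthogonally at $i$), drop the orthogonal geodesics at $A$ and $C$, solve for $D$, and compute the angle there directly; I expect the diagonal method to be shorter, so I would present it first and relegate the coordinate computation to a remark.
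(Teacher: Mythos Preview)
Your diagonal-splitting argument is correct: the right-triangle identities you invoke are exactly the standard ones, the complementarity $\beta_1+\beta_2=\tfrac{\pi}{2}$ does force $\tanh p=\cosh a\,\tanh b$ and $\tanh q=\cosh b\,\tanh a$, and the final telescoping
\[
\cos\gamma=\sinh a\,\sinh b\Bigl(\tfrac{\cosh^2 h}{\sinh^2 h}-\tfrac{1}{\sinh^2 h}\Bigr)=\sinh a\,\sinh b
\]
goes through cleanly. The convexity remark is also the right justification for the angle additivity.

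There is nothing to compare against, however: the paper does not prove this lemma at all. It is quoted verbatim as one of two textbook identities taken from Ratcliffe~\cite{Ra} and used as a black box in the computation of $j^*(l)\mid_{\mathcal{A}}$. So your proposal supplies a self-contained proof where the paper simply cites one; either your diagonal argument or the coordinate computation you sketch as an alternative would be an acceptable replacement for the citation.
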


\begin{figure}[h]
    \begin{center}
        \scalebox{.6}{\epsffile{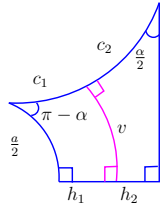}}
    \end{center}
    \caption{A quarter of a hexagon for $t \in \left(\frac{1}{2}, \frac{\sqrt{2}}{2}\right)$.}
    \label{fig:QUARTER}
\end{figure}

\begin{proof}[Proof of Theorem \ref{thm:LENGTH}]
    We first want to find the formula for $t \in \left(\frac{1}{2}, \frac{\sqrt{2}}{2}\right)$. From Figure \ref{fig:QUARTER} and use the above lemmas,
    \begin{eqnarray}
        \cosh(v) & = & \cosh\left(\frac{a}{2}\right) \sin(\pi-\alpha) \notag \\
        \cosh(h_1) & = & \cosh(c_1) \sin(\pi-\alpha) \notag \\
        \cos(\pi-\alpha) & = & \sinh(h_1) \sinh(v) \notag\\
        \cosh(h_2) & = & \cosh(c_2) \sin\left(\frac{\alpha}{2}\right) \notag \\
        \cos\left(\frac{\alpha}{2}\right) & = & \sinh(h_2) \sinh(v). \notag
    \end{eqnarray}
    Then
    \begin{eqnarray}
        \cos^2(\alpha) & = & \left(\cosh^2(c_1) \sin^2(\alpha) - 1 \right)\left(\cosh^2\left(\frac{a}{2}\right) \sin^2(\alpha)-1\right) \notag\\
        \cos^2\left(\frac{\alpha}{2}\right) & = & \left(\cosh^2(c_2) \sin^2\left(\frac{\alpha}{2}\right) - 1 \right)\left(\cosh^2\left(\frac{a}{2}\right) \sin^2(\alpha)-1\right). \notag\\
    \end{eqnarray}
    Use (\ref{eqn:a})
    \begin{eqnarray}
        \cosh^2(c_1) & = & \frac{t^2}{(4t^2-1)(1-t^2)}\notag\\
        \cosh^2(c_2) & = & \frac{5t^2-1}{(4t^2-1)(1-t^2)}.
    \end{eqnarray}
    Hence
    \begin{eqnarray}
        c_1 & = & \log\left( \frac{(2t+1)(1-t)}{\sqrt{(4t^2-1)(1-t^2)}}\right) \notag\\
        c_2 & = & \log\left( \frac{\sqrt{5t^2-1}+2t^2}{\sqrt{(4t^2-1)(1-t^2)}}\right).
    \end{eqnarray}
    Then
    \begin{equation}
        c = c_1 + c_2 = \log\left( \frac{\sqrt{5t^2-1}+2t^2}{(2t-1)(1+t)}\right).
        \label{eqn:c}
    \end{equation}
    It can be shown that (\ref{eqn:c}) works for all $t \in \left(\frac{1}{2}, 1\right)$.
    Therefore
    \begin{equation}
        j^*(l) = a + b + c + d = 2a+2c = 2\log\left( \frac{\sqrt{5t^2-1}+2t^2}{(2t-1)(1-t)}\right).
    \end{equation}
\end{proof}

\end{document}